\begin{document}
\newtheorem{theo}{Theorem}[section]
\newtheorem{atheo}{Theorem*}
\newtheorem{prop}[theo]{Proposition}
\newtheorem{aprop}[atheo]{Proposition*}
\newtheorem{lemma}[theo]{Lemma}
\newtheorem{alemma}[atheo]{Lemma*}
\newtheorem{exam}[theo]{Example}
\newtheorem{coro}[theo]{Corollary}
\theoremstyle{definition}
\newtheorem{defi}[theo]{Definition}
\newtheorem{rem}[theo]{Remark}


\newcommand{\Bb}{{\bf B}}
\newcommand{\Cb}{{\mathbb C}}
\newcommand{\Nb}{{\mathbb N}}
\newcommand{\Qb}{{\mathbb Q}}
\newcommand{\Rb}{{\mathbb R}}
\newcommand{\Zb}{{\mathbb Z}}
\newcommand{\Ac}{{\mathcal A}}
\newcommand{\Bc}{{\mathcal B}}
\newcommand{\Cc}{{\mathcal C}}
\newcommand{\Dc}{{\mathcal D}}
\newcommand{\Fc}{{\mathcal F}}
\newcommand{\Ic}{{\mathcal I}}
\newcommand{\Jc}{{\mathcal J}}
\newcommand{\Kc}{{\mathcal K}}
\newcommand{\Lc}{{\mathcal L}}
\newcommand{\Oc}{{\mathcal O}}
\newcommand{\Pc}{{\mathcal P}}
\newcommand{\Sc}{{\mathcal S}}
\newcommand{\Tc}{{\mathcal T}}
\newcommand{\Uc}{{\mathcal U}}
\newcommand{\Vc}{{\mathcal V}}

\author{Nik Weaver}

\title [The quantum Tur\'{a}n problem]
       {The ``quantum'' Tur\'{a}n problem for operator systems}

\address {Department of Mathematics\\
Washington University\\
Saint Louis, MO 63130}

\email {nweaver@math.wustl.edu}

\date{\em Feb.\ 20, 2018}


\begin{abstract}
Let $\mathcal{V}$ be a linear subspace of $M_n(\mathbb{C})$ which contains
the identity matrix and is stable under Hermitian transpose.
A ``quantum $k$-clique'' for $\mathcal{V}$ is a rank $k$ orthogonal projection
$P \in M_n(\mathbb{C})$ for which ${\rm dim}(P\mathcal{V}P) = k^2$, and a
``quantum $k$-anticlique'' is a rank $k$ orthogonal projection for which
${\rm dim}(P\mathcal{V}P) = 1$. We give upper and lower bounds both for
the largest dimension of $\mathcal{V}$ which would ensure the existence of
a quantum $k$-anticlique, and for the smallest dimension of $\mathcal{V}$
which would ensure the existence of a quantum $k$-clique.
\end{abstract}

\maketitle

\section{Background}

In finite dimensions, an {\it operator system} is a linear subspace
$\mathcal{V}$ of $M_n(\mathbb{C})$ with the properties
\begin{itemize}
\item $I_n \in \mathcal{V}$

\item $A \in \mathcal{V} \Rightarrow A^* \in \mathcal{V}$
\end{itemize}
where $I_n$ is the $n\times n$ identity matrix and $A^*$ is the Hermitian
transpose of $A$.

A natural class of examples arises from graphs with vertex set
$\{1, \ldots, n\}$. Given such a graph $G$, we can define an operator system
$$\mathcal{V}_G = {\rm span}\{E_{ij}:
i = j\mbox{ or $i$ is adjacent to $j$}\}$$
where $E_{ij}$ is the $n\times n$ matrix with a $1$ in the $(i,j)$ entry
and $0$'s elsewhere. Note that the symmetry of the edge set of $G$ is reflected
in the stability of $\mathcal{V}_G$ under Hermitian transpose. (These are
precisely the operator systems which are bimodules over the diagonal subalgebra
of $M_n(\mathbb{C})$.)

Operator systems have been studied by C*-algebraists for decades, but only
recently have they begun to be thought of as being, in some way, a matrix
or ``quantum'' analog of graphs. More generally, we can regard the notion
of a linear subspace of $M_n(\mathbb{C})$ as a linearization of the notion of a subset of
$\{1, \ldots, n\}^2$, i.e., a relation on the set $\{1, \ldots, n\}$. The
two conditions which define operator systems are then matrix versions of
reflexivity and symmetry, so that an operator system becomes
a matrix version of a reflexive, symmetric relation on a set ---
which is effectively the same as a graph on that set.\footnote{There is an obvious 1-1 correspondence between graphs on a vertex set $V$ and reflexive, symmetric relations on $V$. This correspondence is more natural if we adopt the convention that graphs must have a loop at each vertex; in the error correction setting discussed below, where an edge between two vertices expresses that they are ``sufficiently close'', this is in fact a good convention.}
This point of view was developed in \cite{W,W2}.

The term ``quantum'' is supported by the fact that operator systems
appear in the theory of quantum error correction, playing a role exactly
analogous to the role played by ordinary graphs in classical error correction
\cite{DSW}. In the classical case we have a {\it confusability graph} which
tells us when two transmitted signals could be received as the same signal,
and in the quantum case we have a {\it confusability operator system} which
tells us when two transmitted states could be received as the same state. The
two settings even have a natural common generalization; see \cite{W2}.

The first paper to demonstrate that there could be a ``quantum graph theory''
for operator systems was \cite{DSW}, where, driven by the needs of quantum
error correction, a ``quantum Lov\'{a}sz number'' was defined for an arbitrary
operator system, in analogy to the classical Lov\'{a}sz number of a graph.
The error correction perspective on quantum graphs was developed further in
\cite{S}.

The present paper is a sequel to \cite{W3}, where an operator system
version of Ramsey's theorem was proven. This result involves quantum versions
of graph-theoretic cliques and anticliques. The theory of error correction
tells us what a quantum anticlique should be, because in classical error
correction a
``code'' is realized as an anticlique in the confusability graph, whereas
in quantum error correction a ``code'' is realized as an orthogonal projection
$P \in M_n(\mathbb{C})$ satisfying $PAP = \lambda P$ for all $A$ belonging to the
confusability operator system $\mathcal{V}$. Equivalently, this condition
can be stated as
${\rm dim}(P\mathcal{V}P) = 1$ where $P\mathcal{V}P = \{PAP: A \in \mathcal{V}\}$.

Observe that if $P \in M_n(\mathbb{C})$ is any orthogonal projection (i.e.,
$P = P^2 = P^*$) and $\mathcal{V} \subseteq M_n(\mathbb{C})$ is any operator system,
then $P\mathcal{V}P$ is effectively a set of linear
transformations from ${\rm ran}(P)$ to itself, and the condition that
$P$ should be a code is that this set should be minimal, consisting only
of the scalar multiples of the identity operator on ${\rm ran}(P)$. If
these are the anticliques of $\mathcal{V}$, then it is natural to take
the cliques of $\mathcal{V}$ to be the orthogonal projections $P$ for
which $P\mathcal{V}P$ is maximal, i.e., it consists of all linear operators
from ${\rm ran}(P)$ to itself. This can also be expressed by saying that
${\rm dim}(P\mathcal{V}P) = k^2$. We therefore make the following definition.

\begin{defi}
Let $\mathcal{V} \subseteq M_n(\mathbb{C})$ be an operator system. A rank $k$ orthogonal
projection $P \in M_n(\mathbb{C})$ is a {\it quantum $k$-anticlique} for $\mathcal{V}$
if ${\rm dim}(P\mathcal{V}P) = 1$, and a {\it quantum $k$-clique} for
$\mathcal{V}$ if ${\rm dim}(P\mathcal{V}P) = k^2$.
\end{defi}

In general, if we identify $PM_n(\mathbb{C})P$ with $M_k(\mathbb{C})$, where $k = {\rm rank}(P)$,
then $P\mathcal{V}P$ becomes an operator system in $M_k(\mathbb{C})$. This is the
{\it induced operator system} which is analogous to a subgraph induced on
a subset of the vertex set of a graph. (Some intuition for this analogy
is given in \cite{W2}, again with the natural common generalization
mentioned earler.) Thus $P$ is a quantum clique if the
induced operator system is a full matrix algebra and it is a quantum
anticlique if the induced operator system is trivial.

The classical theorem of Ramsey states that for any $k$ there exists $n$ such
that every graph with $n$ vertices has either a $k$-clique or a
$k$-anticlique. The quantum Ramsey theorem proven in
\cite{W3} states that for any $k$ there exists $n$ such that every operator
system in $M_n(\mathbb{C})$ has either a quantum $k$-clique or a quantum $k$-anticlique.
The most surprising aspect of this result is that in the quantum setting
$n$ grows polynomially in $k$, not exponentially as in the classical case.
(The specific value given in \cite{W3} is $n = 8k^{11}$, but this is surely
not optimal. An easy lower bound is $n = (k-1)(k^2 - 1) = k^3 - k^2 - k + 1$,
obtained by taking
$r = k^2 - 1$ in the construction described in Proposition \ref{projsum}
below.) A quantum Ramsey theorem for infinite-dimensional operator systems
was proven in \cite{KKS}.

Michael Jury suggested to me the problem of finding a version of
Tur\'{a}n's theorem for operator systems. The classical theorem of Tur\'{a}n
gives the maximum number of edges a graph with $n$ vertices can have
without having any $(k+1)$-cliques; by taking edge complements, we see that
${n\choose 2}$ minus this number is the minimum number of edges a graph with
$n$ vertices can have without having any $(k+1)$-anticliques. The analogous
questions for operator systems are: what is the maximum dimension
$T^{\uparrow}(n,k)$ of an operator system in
$M_n(\mathbb{C})$ having no quantum $(k+1)$-cliques,
and what is the minimum dimension $T^{\downarrow}(n,k)$ of an operator
system in $M_n(\mathbb{C})$ having
no quantum $(k+1)$-anticliques. These two questions constitute a ``quantum
Tur\'{a}n problem''. The goal of this paper is not to give
exact answers to them, but merely to provide upper and lower
bounds for both values. Specifically, we prove
$$\sqrt{\frac{n}{k}} < T^{\downarrow}(n,k)
\leq \left\lceil\frac{n}{k}\right\rceil\quad\mbox{and}\quad
2(k-1)n - (k-1)^2 + 3 \leq T^{\uparrow}(n,k) < 16(k+1)^8n.$$

Because, unlike the classical case, there is no natural symmetry between
quantum cliques and quantum anticliques, we are really dealing with two
distinct questions. Broadly speaking, it is {\it easy} to find quantum
cliques and {\it hard} to find quantum anticliques. This is dramatically
illustrated by the fact that our upper bound on the maximum dimension of
an operator system having no quantum $(k+1)$-cliques is linear in $n$. As
there are $n^2$ available dimensions in $M_n(\mathbb{C})$,
this means that when $n$ is large compared to $k$ one needs only a
comparatively small number of dimensions to guarantee that quantum
$(k+1)$-cliques exist. In contrast, the upper bound on the lower quantum
Tur\'{a}n number is $\lceil{\frac{n}{k}}\rceil$, meaning that
${\rm dim}(\mathcal{V})$ has to be even smaller than this to ensure that
quantum $(k+1)$-anticliques exist.

I gratefully acknowledge Robert Bryant's essential contributions to the
material in the last part of Section 2.

\section{Lower quantum Tur\'{a}n numbers}

We define the {\it lower quantum Tur\'{a}n number} $T^{\downarrow}(n,k)$
to be the smallest number $d$ such that some operator system in $M_n(\mathbb{C})$
whose dimension is $d$ has no quantum $(k+1)$-anticliques.

Every rank $1$ projection is always both a quantum $1$-anticlique and a
quantum $1$-clique for any
operator system, so let us assume throughout that $k \geq 1$.

Classically, a graph on $n$ vertices which lacks $(k+1)$-anticliques, and has
the minimum number of edges for doing so, looks like a disjoint union of
$k$ many cliques of equal or nearly equal size. So a natural guess for
an operator system in $M_n(\mathbb{C})$ which lacks quantum $(k+1)$-anticliques and has
the smallest possible dimension is a direct sum of $k$ many matrix
algebras of equal or nearly equal size, $\mathcal{V} = M_{n_1}(\mathbb{C}) \oplus
\cdots \oplus M_{n_{k-1}}(\mathbb{C})$. This operator system indeed has no quantum
$(k+1)$-anticliques; in fact, it has no quantum $2$-anticliques because it
contains the diagonal operator system $D_n$, which itself has no quantum
$2$-anticliques \cite[Proposition 2.1]{W3}. But this shows that this
$\mathcal{V}$ is far from being minimal: its dimension is
approximately $\frac{n^2}{k}$, whereas the dimension of $D_n$ is $n$.
Quantum $(k+1)$-anticliques for $k > 1$ can be blocked using even fewer
dimensions.

\begin{prop}\label{projsum}
Let $P_1, \ldots, P_r$ be orthogonal projections in $M_n(\mathbb{C})$, each of
rank at most $k$, satisfying $P_1 + \cdots + P_r = I_n$. Then the operator
system $\mathcal{V} = {\rm span}(P_1, \ldots, P_r)$ has no quantum
$(k+1)$-anticliques.
\end{prop}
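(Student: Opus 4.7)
The plan is to argue by contradiction: suppose $P$ is a rank $(k+1)$ projection with $P\mathcal{V}P$ one-dimensional. Since $\mathcal{V} = \mathrm{span}(P_1,\ldots,P_r)$ and $P \in P\mathcal{V}P$ (because $\sum P_i = I_n$ gives $\sum PP_iP = P$, and $P\mathcal{V}P$ is spanned by the $PP_iP$), the one-dimensional space $P\mathcal{V}P$ must be $\mathbb{C}P$. Therefore there exist scalars $\lambda_i$ with $PP_iP = \lambda_i P$ for every $i$.

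Next I would use positivity and the rank bound. Each $P_i$ is positive, so $PP_iP$ is a positive operator, whence $\lambda_i \geq 0$. Crucially, $\mathrm{rank}(PP_iP) \leq \mathrm{rank}(P_i) \leq k$, while $\mathrm{rank}(\lambda_i P) = k+1$ whenever $\lambda_i > 0$. This forces $\lambda_i = 0$ for every $i$.

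Summing the identities $PP_iP = 0$ over $i$ then yields $P = P(\sum_i P_i)P = 0$, contradicting that $P$ has rank $k+1$. Hence no such $P$ can exist, and $\mathcal{V}$ has no quantum $(k+1)$-anticlique.

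There is no real obstacle here; the proof rests only on the elementary observation that compressing a positive operator of rank $\leq k$ cannot yield a nonzero multiple of a rank $(k+1)$ projection. The one point to be mildly careful about is confirming that $P\mathcal{V}P$ does contain $P$ (so that its being one-dimensional identifies it with $\mathbb{C}P$), but this is immediate from $\sum P_i = I_n$.
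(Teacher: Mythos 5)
Your argument is correct and matches the paper's proof essentially line for line: both proceed by contradiction, observe that $\mathrm{rank}(PP_iP)\le\mathrm{rank}(P_i)\le k<\mathrm{rank}(P)$ forces each $PP_iP$ to be zero, and then sum to get $P=0$. The extra remark about positivity of $PP_iP$ is harmless but not needed, since the rank bound alone rules out any nonzero scalar multiple of $P$.
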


\begin{proof}
Let $P$ be a rank $k+1$ orthogonal projection in $M_n(\mathbb{C})$ and assume
$P\mathcal{V}P = \mathbb{C}\cdot P$. For each $i$, the matrix $PP_iP$ has
rank at most ${\rm rank}(P_i) \leq k$, so the only way it can be a scalar
multiple of $P$ is for it to be zero. But this implies that
$P = P(P_1 + \cdots + P_r)P = 0$, a contradiction.
\end{proof}

(If $r = {\rm dim}(\mathcal{V}) \leq k^2 - 1$ and each $P_i$ has rank
$k-1$, then this operator system has neither quantum $k$-cliques nor
quantum $k$-anticliques, explaining a parenthetical comment made
in the introduction.)

The minimum value of $r$ for which there exist $r$ projections, each of
rank at most $k$, which sum to $I_n$ is
$\lceil{\frac{n}{k}}\rceil$. Thus the following corollary is immediate.

\begin{coro}\label{ltupper}
$T^{\downarrow}(n,k) \leq \lceil{\frac{n}{k}}\rceil$.
\end{coro}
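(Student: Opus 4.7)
The plan is to derive this directly from Proposition \ref{projsum}: it suffices to exhibit $r := \lceil n/k \rceil$ orthogonal projections in $M_n(\mathbb{C})$, each of rank at most $k$, summing to $I_n$, and then observe that their span is an operator system of dimension at most $r$ with no quantum $(k+1)$-anticliques.

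To construct the projections, I would partition $\{1, \ldots, n\}$ into $r$ consecutive blocks $B_1, \ldots, B_r$ of sizes $|B_i| \leq k$ (which is possible precisely because $r = \lceil n/k \rceil$), and let $P_i$ be the diagonal projection onto $\mathrm{span}\{e_j : j \in B_i\}$, where $\{e_1, \ldots, e_n\}$ is the standard basis. Then each $P_i$ has rank $|B_i| \leq k$, they are pairwise orthogonal, and $P_1 + \cdots + P_r = I_n$. By Proposition \ref{projsum}, the operator system $\mathcal{V} = \mathrm{span}(P_1, \ldots, P_r)$ has no quantum $(k+1)$-anticliques. Since the $P_i$ are nonzero and mutually orthogonal they are linearly independent, so $\dim(\mathcal{V}) = r$, and hence $T^{\downarrow}(n,k) \leq r = \lceil n/k \rceil$.

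There is essentially no obstacle here; the corollary is a direct consequence of the proposition together with a trivial counting argument showing that $\lceil n/k \rceil$ is the minimum number of rank-at-most-$k$ projections needed to sum to $I_n$ (any fewer would give total rank less than $n$). The only thing worth flagging is that the conclusion uses $\dim(\mathcal{V}) \leq r$ rather than equality, so linear independence of the $P_i$ is not strictly necessary for the bound.
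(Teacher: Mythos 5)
Your proof is correct and takes essentially the same approach as the paper: the paper simply observes that $\lceil n/k \rceil$ is the minimum number of rank-at-most-$k$ projections summing to $I_n$ and invokes Proposition \ref{projsum}, while you spell out the standard block-diagonal construction explicitly. Your remark that only $\dim(\mathcal{V}) \leq r$ is needed is also accurate.
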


If $r = \lceil{\frac{n}{k}}\rceil$ then the operator system described
in Proposition \ref{projsum} is minimal in the sense that every operator
system properly contained in it does have a quantum $(k+1)$-anticlique. In
order to prove this, it will be useful to have the following alternative
characterization of quantum anticliques. (This characterization is implicit
in \cite{KLV}.)

\begin{lemma}\label{vectorlemma}
Let $\mathcal{V} \subseteq M_n(\mathbb{C})$ be an operator system. Then $\mathcal{V}$
has a quantum $k$-anticlique if and only if there exists an orthonormal
set $\{v_1, \ldots, v_k\}$ in $\mathbb{C}^n$ such that for every Hermitian
$A \in \mathcal{V}$
$$\langle Av_i, v_j\rangle = 0\qquad\mbox{and}\qquad
\langle Av_i,v_i\rangle = \langle Av_j,v_j\rangle$$
whenever $i \neq j$.
\end{lemma}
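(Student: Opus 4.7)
The plan is to prove both directions by translating the condition $\dim(P\mathcal{V}P) = 1$ into a matrix-entry statement with respect to an orthonormal basis of the range of $P$. Since $I_n \in \mathcal{V}$ we always have $P \in P\mathcal{V}P$, so $\dim(P\mathcal{V}P) = 1$ is equivalent to saying that $PAP \in \mathbb{C}\cdot P$ for every $A \in \mathcal{V}$. This is the key reformulation I will use throughout.

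For the forward direction, suppose $P$ is a quantum $k$-anticlique and let $\{v_1, \ldots, v_k\}$ be any orthonormal basis for the range of $P$. For Hermitian $A \in \mathcal{V}$ write $PAP = \lambda_A P$, and compute the matrix coefficients: since $Pv_i = v_i$, we have $\langle Av_i, v_j\rangle = \langle APv_i, Pv_j\rangle = \langle PAPv_i, v_j\rangle = \lambda_A \delta_{ij}$. This simultaneously gives both stated conditions (vanishing off-diagonal inner products and equal diagonal inner products, all equal to $\lambda_A$).

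For the converse, assume we have the orthonormal set $\{v_1,\ldots,v_k\}$ with the stated properties and let $P$ be the orthogonal projection onto their span. For any Hermitian $A \in \mathcal{V}$, the same matrix coefficient computation shows that $PAP$, expressed in the basis $\{v_1,\ldots,v_k\}$ of the range of $P$, is a scalar multiple of the identity, hence $PAP = \lambda_A P$ for some scalar $\lambda_A$. To pass from Hermitian $A$ to arbitrary $A \in \mathcal{V}$, decompose $A = \frac{A + A^*}{2} + i\cdot\frac{A - A^*}{2i}$; both summands are Hermitian and lie in $\mathcal{V}$ because $\mathcal{V}$ is closed under adjoints, so by linearity $PAP \in \mathbb{C}\cdot P$ for every $A \in \mathcal{V}$. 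Therefore $P\mathcal{V}P = \mathbb{C}\cdot P$ (nonzero since $P \in P\mathcal{V}P$), so $P$ is a quantum $k$-anticlique.

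There is no real obstacle here; the statement is essentially a change of language. The only point that requires a moment's care is the reduction to the Hermitian case in the converse, which is why the hypothesis is phrased only for Hermitian $A$: one must invoke the self-adjointness of $\mathcal{V}$ to split a general element into Hermitian parts lying in $\mathcal{V}$.
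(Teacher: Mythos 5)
Your proof is correct and takes essentially the same approach as the paper: both directions rely on the computation $\langle Av_i, v_j\rangle = \langle PAPv_i, v_j\rangle$ in an orthonormal basis of $\mathrm{ran}(P)$, and the converse handles general $A \in \mathcal{V}$ by splitting into Hermitian parts using closure of $\mathcal{V}$ under adjoints. The paper phrases the reverse-direction computation via $P = \sum v_iv_i^*$ rather than as a matrix-coefficient statement, but this is only a cosmetic difference.
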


\begin{proof}
If there is a quantum $k$-anticlique $P$ for $\mathcal{V}$ then any orthonormal
basis $\{v_1, \ldots, v_k\}$ of its range is easily seen to have the stated
properties, since $PAP = \lambda P$ implies $\langle Av_i,v_j\rangle =
\langle PAPv_i,v_j\rangle = \lambda\langle v_i,v_j\rangle$ for all $i$
and $j$. Conversely, suppose we are given a set
$\{v_1, \ldots, v_k\}$ satisfying the conditions of the lemma and let $P$
be the orthogonal projection onto its span. Since every matrix in $\mathcal{V}$
is a linear combination of two Hermitian matrices in $\mathcal{V}$, the
stated equations will be true of any matrix in $\mathcal{V}$. So fix a matrix
$A \in \mathcal{V}$ and let $\lambda$ be the common value of the inner
products $\langle Av_i,v_i\rangle$. Then $P = \sum v_iv_i^*$ and so
$$PAP = \sum_{i,j} v_iv_i^* A v_j v_j^* =
\sum \lambda v_iv_i^* = \lambda P,$$
since $v_i^*Av_j = \langle Av_j,v_i\rangle$ is $0$ when $i \neq j$ and
$\lambda$ when $i = j$. Thus $PAP$ is a scalar multiple of $P$ for every
$A \in \mathcal{V}$, i.e., $P$ is a quantum anticlique.
\end{proof}

\begin{prop}\label{vmin}
Let $P_1, \ldots, P_r$ be orthogonal projections in $M_n(\mathbb{C})$ satisfying
$P_1 + \cdots + P_r = I_n$. Then any operator system properly contained in
$\mathcal{V} = {\rm span}(P_1, \ldots, P_r)$ has a quantum
$k$-anticlique where $k$ is the sum of the two smallest ranks of the
$P_i$'s.
\end{prop}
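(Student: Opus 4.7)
My plan is to reduce to the codimension-one case and then construct the anticlique explicitly via a matrix analysis. Observe first that a $k$-anticlique for an operator system $\mathcal{W}'$ is automatically a $k$-anticlique for every $\mathcal{W} \subseteq \mathcal{W}'$, since $P\mathcal{W}P \subseteq P\mathcal{W}'P = \mathbb{C}P$, so it suffices to treat the hardest case $\dim\mathcal{W} = r - 1$. Because $\sum_l P_l = I_n$ forces the $P_l$ to be mutually orthogonal (the trace argument shows $\|P_lP_m\|_{HS}^2 = \text{tr}(P_lP_m) = 0$ for $m\neq l$), $\mathcal{V}$ is isomorphic as an operator system to $\mathbb{C}^r$, and the Hermitian part of $\mathcal{W}$ corresponds to a hyperplane $x^{\perp} \subseteq \mathbb{R}^r$ for some nonzero $x$. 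The condition $I \in \mathcal{W}$ forces $\sum_l x_l = 0$, so both $I^{\pm} := \{l : \pm x_l > 0\}$ are nonempty.

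My next step is to reformulate the anticlique condition matricially. A rank-$k$ anticlique corresponds to an isometry $V \colon \mathbb{C}^k \to \mathbb{C}^n$ with $V^*\mathcal{W}V \subseteq \mathbb{C} I_k$; setting $M_l := V^*P_lV$, this amounts to finding $k \times k$ positive semidefinite matrices $M_l$ of rank at most $k_l := \text{rank}(P_l)$ with $\sum_l M_l = I_k$, such that $\sum_l c_l M_l$ is a scalar matrix for every $c \in x^{\perp}$. A short linear algebra calculation shows that the last condition is equivalent to the existence of a Hermitian $H \in M_k(\mathbb{C})$ and reals $\alpha_l$ with $\sum_l \alpha_l = 1$ satisfying $M_l = \alpha_l I_k + x_l H$ for each $l$. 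Conversely, any family of matrices $M_l$ meeting the positivity, rank, and sum constraints is realized by some isometry: factor $M_l = A_l^* A_l$ with $A_l \in M_{k_l \times k}(\mathbb{C})$ and assemble the $A_l$ as blocks of $V$ in the rows indexing $\text{ran}(P_l)$.

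For the explicit construction, let $k_{\pm} := \min_{l \in I^{\pm}} k_l$ and $S := \sum_{l \in I^+} x_l > 0$. Because $k_+$ and $k_-$ are minima over two disjoint nonempty subsets of $\{k_1, \ldots, k_r\}$, I have $k_+ + k_- \geq k_1 + k_2 = k$; this is the key combinatorial input tying $k$ to the two smallest ranks. Now I take $H$ to be Hermitian with eigenvalues $0$ of multiplicity $k - k_+$ and $1/S$ of multiplicity $k_+$, set $\alpha_l = -x_l/S$ for $l \in I^-$, and set $\alpha_l = 0$ for every other $l$. A direct diagonalization verifies $\sum_l \alpha_l = 1$ and that each $M_l = \alpha_l I_k + x_l H$ is positive semidefinite of rank $k_+$ (when $l \in I^+$), $k - k_+$ (when $l \in I^-$), or $0$ (when $x_l = 0$); each of these is at most $k_l$, thanks to $k_- \geq k - k_+$. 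The main obstacle is recognizing the structural form $M_l = \alpha_l I + x_l H$ forced by the anticlique condition and then finding a spectrum for $H$ satisfying the positivity and rank constraints at every index simultaneously; once this is done, realizing the $M_l$ via an actual isometry is routine.
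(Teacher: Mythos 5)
Your proof is correct, and when unwound it produces exactly the same compression as the paper's, but it reaches it by a more structural route. The paper applies Lemma \ref{vectorlemma} and directly writes down the orthonormal vectors $v_l$ as square-root combinations of basis vectors of the ranges $\mathrm{ran}(P_i)$, one family built over the indices where $\vec b$ is positive and one over the indices where it is negative. You instead recast the anticlique condition in terms of an isometry $V\colon\Cb^k\to\Cb^n$ and the compressed positives $M_l = V^*P_lV$, and you observe that the anticlique condition \emph{forces} the affine form $M_l = \alpha_l I_k + x_l H$ with $\sum\alpha_l = 1$ --- a necessary-form statement that the paper does not record and which makes the rank bookkeeping transparent. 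Choosing $H$ with eigenvalues $0$ (multiplicity $k-k_+$) and $1/S$ (multiplicity $k_+$) then reduces the whole problem to the same combinatorial fact the paper uses, namely that ranks at two distinct indices sum to at least $k$; and the factorization $M_l = A_l^*A_l$ assembled into $V$ is precisely the paper's vectors read off columnwise. Two minor remarks: the initial reduction to a codimension-one Hermitian subspace is not needed (the paper simply picks any nonzero $\vec b$ annihilating the coefficient subspace, which need not be a hyperplane), and your notation overloads $k_1,k_2$ to mean both $\mathrm{rank}(P_1),\mathrm{rank}(P_2)$ and the two smallest ranks; neither affects correctness.
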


\begin{proof}
Let $\mathcal{V}_0$ be an operator system properly contained in $\mathcal{V}$.
Its Hermitian part $\mathcal{V}_0^h$ has the form
$$\mathcal{V}_0^h =
\left\{\sum a_iP_i: \vec{a} = (a_1, \ldots, a_r) \in E\right\}$$
where $E$ is some proper subspsace of $\mathbb{R}^r$ which includes the
vector $(1, \ldots, 1)$ (since we require $I_n \in \mathcal{V}_0$). So we
can find a nonzero $\vec{b} \in \mathbb{R}^r$ such that
$\vec{a}\cdot\vec{b} = 0$ for all $\vec{a} \in E$. Since $(1, \ldots, 1)
\in E$, it follows that $\vec{b}$ contains both strictly positive and
strictly negative components; by rearranging, we can assume that
$b_1, \ldots, b_j > 0$ and $b_{j+1}, \ldots, b_r \leq 0$. We can also assume
that $b_1 + \cdots + b_j = -b_{j+1} - \cdots - b_r = 1$.

For each $i$ let $e_{i,1}, \ldots, e_{i,{\rm rank}(P_i)}$ be an orthonormal
basis of ${\rm ran}(P_i)$. Let $k_1$ be the smallest rank among
$P_1, \ldots, P_j$ and let $k_2$ be the smallest rank among $P_{j+1}, \ldots,
P_r$, so that $k \leq k_1 + k_2$.
Then for $1 \leq l \leq k_1$ set
$v_l = \sqrt{b_1}e_{1,l} + \cdots + \sqrt{b_j}e_{j,l}$, and for
$1 \leq l \leq k_2$ set
$v_{k_1 + l} = \sqrt{-b_{j+1}}e_{j+1,l} + \cdots + \sqrt{-b_r}e_{r,l}$.
The vectors $v_l$ form an orthonormal set of size $k_1 + k_2$. For any
$A = a_1P_1 + \cdots + a_rP_r \in \mathcal{V}_0^h$ we then have
$\langle Av_l,v_{l'}\rangle = 0$ whenever $l \neq l'$, and for any
$1 \leq l \leq k_1$ and $k_1 + 1 \leq l' \leq k_1 + k_2$ we also have
$$\langle Av_l,v_l\rangle = a_1b_1 + \cdots + a_jb_j =
-a_{j+1}b_{j+1} - \cdots - a_rb_r = \langle Av_{l'},v_{l'}\rangle.$$
So Lemma \ref{vectorlemma} implies that $\mathcal{V}_0$ has a quantum
$(k_1 + k_2)$-anticlique.
\end{proof}

\begin{coro}\label{mincor}
Let $\mathcal{V}$ be the operator system from Proposition \ref{projsum}
and assume that $r = \lceil{\frac{n}{k}}\rceil$. Then every operator
system properly contained in $\mathcal{V}$ has a quantum $(k+1)$-anticlique.
\end{coro}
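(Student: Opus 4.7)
The plan is to reduce this directly to Proposition \ref{vmin}: its conclusion produces a quantum $(k_1+k_2)$-anticlique, where $k_1$ and $k_2$ are the two smallest ranks among $P_1,\ldots,P_r$, so it will suffice to show that under the hypothesis $r=\lceil n/k\rceil$ the inequality $k_1+k_2\geq k+1$ holds. Once we have a quantum $(k_1+k_2)$-anticlique $P$ with $k_1+k_2\geq k+1$, we can restrict to any rank-$(k+1)$ sub-projection $P'\leq P$, and since $P'AP'=P'(PAP)P'=\lambda P'$ for all $A\in \mathcal{V}_0$, we obtain a quantum $(k+1)$-anticlique.

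The bound $k_1+k_2\geq k+1$ is a short counting argument. Order the projections so that $P_1,P_2$ have ranks $k_1\leq k_2$; the remaining $r-2$ projections each have rank at most $k$, so taking traces in the identity $P_1+\cdots+P_r=I_n$ gives
\[
n \;=\; k_1+k_2+\sum_{i=3}^{r}\mathrm{rank}(P_i)\;\leq\;k_1+k_2+(r-2)k.
\]
The definition $r=\lceil n/k\rceil$ yields $rk\leq n+k-1$, hence
\[
k_1+k_2 \;\geq\; n-(r-2)k \;=\; 2k-(rk-n) \;\geq\; 2k-(k-1)\;=\;k+1,
\]
which is exactly what we need.

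Combining the two ingredients finishes the proof: given an arbitrary operator system $\mathcal{V}_0$ properly contained in $\mathcal{V}$, Proposition \ref{vmin} produces a quantum $(k_1+k_2)$-anticlique for $\mathcal{V}_0$, and the inequality above together with the sub-projection observation turns it into a quantum $(k+1)$-anticlique. There is no real obstacle here; the only place one has to be careful is in verifying that the slack in $\lceil n/k\rceil$ is exactly matched by the deficit allowed in the two smallest ranks, which is precisely the computation above.
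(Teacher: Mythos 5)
Your proof is correct and takes essentially the same route as the paper: both reduce to Proposition \ref{vmin} and then verify that $k_1+k_2\geq k+1$ when $r=\lceil n/k\rceil$. The paper establishes that inequality by a replacement argument (if $k_1+k_2\leq k$, merge $P_1$ and $P_2$ into one projection of rank at most $k$, contradicting the minimality of $r$), whereas you carry out the equivalent trace count directly; you also make explicit the step of passing from a rank-$(k_1+k_2)$ anticlique to a rank-$(k+1)$ sub-anticlique, which the paper leaves implicit.
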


\begin{proof}
Any family of projections, each of rank at most $k$, which sums to $I_n$
must contain at least $r = \lceil{\frac{n}{k}}\rceil$ members. Thus if
it contains exactly this many members then the
sum of the two smallest ranks of the $P_i$'s must be at least $k+1$, as
otherwise these two projections could be replaced by a single projection
of rank at most $k$. The conclusion now follows from Proposition \ref{vmin}.
\end{proof}

It is not to be expected that the kind of minimality expressed in Corollary
\ref{mincor} can only happen at dimension $\lceil{\frac{n}{k}}\rceil$.
The following is an easy counterexample.

\begin{exam}\label{nonmin}
Take $n = 6$ and suppose $P_1 + P_2 = I_6$ where ${\rm rank}(P_1) =
{\rm rank}(P_2) = 3$. Then by Propositions \ref{projsum} and \ref{vmin},
${\rm span}(P_1,P_2)$ is a two-dimensional operator system with no quantum
$4$-anticliques, but every operator system properly contained in it
(there is only one, namely $\mathbb{C}\cdot I_6$) has a quantum $4$-anticlique.

Alternatively, suppose $Q_1 + Q_2 + Q_3 = I_6$ where ${\rm rank}(Q_1) =
{\rm rank}(Q_2) = {\rm rank}(Q_3) = 2$. Then ${\rm span}(Q_1,Q_2,Q_3)$
is a three-dimensional operator system which has no quantum $4$-anticliques
(Proposition \ref{projsum}), but I claim that any operator system properly
contained in it does have a quantum $4$-anticlique. To see this, note first
that
any two-dimensional operator system in $M_6(\mathbb{C})$ equals ${\rm span}(I_6,A)$ for
some Hermitian matrix $A$, and if it is contained in ${\rm span}(Q_1,Q_2,Q_3)$
then we can write $A = aQ_1 + bQ_2 + cQ_3$ for some $a,b,c \in \mathbb{R}$.
Without loss of generality assume $a \leq b \leq c$. If either $a = b$ or
$b = c$ then the existence of a quantum $4$-anticlique is immediate.
Otherwise let $\{e_{i,1}, e_{i,2}\}$ be an orthonormal basis for
${\rm ran}(Q_i)$ ($i = 1,2,3$) and set $\alpha = \frac{c-b}{c-a}$ and
$\gamma = \frac{b-a}{c-a}$, so that $\alpha + \gamma = 1$ and
$a\alpha + c\gamma = b$. Then the set $S = \{\sqrt{\alpha}e_{1,1} +
\sqrt{\gamma}e_{3,1}, \sqrt{\alpha}e_{1,2} + \sqrt{\gamma}e_{3,2},
e_{2,1}, e_{2,2}\}$ satisfies the conditions given in Lemma \ref{vectorlemma},
so ${\rm span}(I_6,A)$ has a quantum $4$-anticlique.
\end{exam}

In general, for any Hermitian $A \in M_n(\mathbb{C})$, a straightforward modification
of the argument used in this example shows that we can always find a quantum
$\lceil{\frac{n}{2}}\rceil$-anticlique for the two-dimensional
operator system $\mathcal{V} = {\rm span}(I_n,A)$. Let us record this fact:

\begin{prop}\label{singlematrix}
Let $A \in M_n(\mathbb{C})$ be Hermitian. Then ${\rm span}(I_n,A)$ has a quantum
$\lceil{\frac{n}{2}}\rceil$-anticlique.
\end{prop}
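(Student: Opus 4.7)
My plan is to apply Lemma~\ref{vectorlemma} directly, using the spectral decomposition of $A$ to construct the required orthonormal set. Since every Hermitian element of $\mathcal{V} = {\rm span}(I_n, A)$ has the form $aI_n + bA$ with $a,b \in \mathbb{R}$, and the two conditions of Lemma~\ref{vectorlemma} are automatic for the $I_n$ part on any orthonormal set, it will suffice to produce an orthonormal set $\{v_1, \ldots, v_k\}$ with $k = \lceil n/2 \rceil$ such that $\langle Av_i, v_j\rangle = 0$ for $i \neq j$ and $\langle Av_i, v_i\rangle$ is the same for every $i$.

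First I would diagonalize $A$: let $e_1, \ldots, e_n$ be an orthonormal eigenbasis with eigenvalues $\lambda_1 \leq \cdots \leq \lambda_n$. Then I would pair up the indices as $(1,n), (2,n-1), \ldots$, leaving the middle index unpaired when $n$ is odd. I would fix a common target value $c$: any $c \in [\lambda_{n/2}, \lambda_{n/2+1}]$ when $n$ is even (this interval is nonempty since the eigenvalues are ordered), and $c = \lambda_k$ when $n$ is odd. For each pair $(l, n{+}1{-}l)$ I would set $v_l = \alpha_l e_l + \beta_l e_{n+1-l}$ with $\alpha_l,\beta_l \geq 0$ satisfying $\alpha_l^2 + \beta_l^2 = 1$ and $\alpha_l^2 \lambda_l + \beta_l^2 \lambda_{n+1-l} = c$; this is solvable because $\lambda_l \leq c \leq \lambda_{n+1-l}$ for every paired index. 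When $n$ is odd I would take the extra vector $v_k = e_k$, observing that $\langle Ae_k, e_k\rangle = \lambda_k = c$.

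To finish, I would verify the conditions of Lemma~\ref{vectorlemma}. The supports of the $v_l$ in the eigenbasis are pairwise disjoint, so the $v_l$ are orthonormal; and since $A$ is diagonal in this basis, $\langle Av_l, v_{l'}\rangle = 0$ automatically for $l \neq l'$. The equal-expectation condition holds by construction. Lemma~\ref{vectorlemma} then yields a quantum $k$-anticlique, namely the orthogonal projection onto ${\rm span}(v_1, \ldots, v_k)$.

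This is essentially a slight generalization of the construction carried out in Example~\ref{nonmin}, and I do not anticipate a serious obstacle. The only mildly delicate point is verifying that a common value $c$ with $\lambda_l \leq c \leq \lambda_{n+1-l}$ exists for every pair, which reduces to the middle pair since every other pair gives a strictly wider admissible interval.
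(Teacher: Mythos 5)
Your proof is correct and follows essentially the same route as the paper: diagonalize $A$, order the eigenvalues, pick a common target value near the middle of the spectrum, use convex combinations on pairs of eigenvalues to hit that target, and invoke Lemma~\ref{vectorlemma}. The only difference is cosmetic: you pair indices outside-in as $(l, n{+}1{-}l)$, while the paper pairs $(i, r{+}i)$ with $r = \lceil n/2\rceil$ and target $\lambda_r$; both pairings work.
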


This is proven by ordering the eigenvalues of $A$ as $\lambda_1 \leq
\cdots \leq \lambda_n$, then letting $r = \lceil{\frac{n}{2}}\rceil$ and
for $1 \leq i \leq r - 1$ finding a convex combination
$\alpha_i\lambda_i + \alpha_{r + i}\lambda_{r + i} = \lambda_r$, and
then applying Lemma \ref{vectorlemma} to the vectors
$\sqrt{\alpha_i}v_i + \sqrt{\alpha_{r+i}}v_{r+1}$ plus the one additional
vector $v_r$, where $v_i$ is the eigenvector belonging to
$\lambda_i$.

In Example \ref{nonmin} this number is improved to
$\lceil{\frac{n}{2}}\rceil + 1$ because the two middle eigenvalues of $A$
are equal and their corresponding eigenvectors can both be used separately.

Actually, Tur\'{a}n's theorem does not just give the minimum number of
edges in a $(k+1)$-anticliqueless graph on $n$ vertices, it explicitly
describes the structure of such a graph with that minimum number of
edges --- and there is only one up to
isomorphism. I do not know whether $\lceil{\frac{n}{k}}\rceil$ is
the minimum dimension of a quantum $(k+1)$-anticliqueless operator system
in $M_n(\mathbb{C})$, but the operator system described in Proposition \ref{projsum}
with $r = \lceil{\frac{n}{k}}\rceil$ is not the only quantum
$(k+1)$-anticliqueless operator system of that dimension. We can see this
from the following extension of Proposition \ref{projsum}.

\begin{prop}\label{generic}
Let $A_1, \ldots, A_r$ be positive matrices in $M_n(\mathbb{C})$, each of
rank at most $k$, and suppose that the dimension of
${\rm ker}(\sum A_i)$ is also at most $k$. Then the operator system
$\mathcal{V} =
{\rm span}(I_n, A_1, \ldots, A_r)$ has no quantum $(k+1)$-anticliques.
\end{prop}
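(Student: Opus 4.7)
The plan is to imitate the proof of Proposition \ref{projsum}, exploiting the positivity of the $A_i$'s to carry the argument through in this more general setting. Suppose for contradiction that $P$ is a quantum $(k+1)$-anticlique for $\mathcal{V}$, so that $P A P \in \mathbb{C}\cdot P$ for every $A \in \mathcal{V}$. In particular, for each $i$ there is a scalar $\lambda_i$ with $P A_i P = \lambda_i P$. Since $A_i$ has rank at most $k$, so does $P A_i P$, whereas $P$ has rank $k+1$; hence $\lambda_i = 0$ and $P A_i P = 0$.

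The key step, which is where positivity enters, is to upgrade $P A_i P = 0$ to $A_i P = 0$. Writing $A_i = B_i^* B_i$ with $B_i = A_i^{1/2}$, for any $v \in \mathbb{C}^n$ one has
$$\|B_i P v\|^2 = \langle A_i P v, P v\rangle = \langle P A_i P v, v\rangle = 0,$$
so $B_i P = 0$ and therefore $A_i P = B_i^* B_i P = 0$. Thus $\operatorname{ran}(P) \subseteq \ker(A_i)$ for every $i$, and since the $A_i$'s are positive this intersection equals $\ker(\sum_i A_i)$ (a positive sum of positive operators annihilates $v$ precisely when each summand does). Therefore $\operatorname{ran}(P) \subseteq \ker(\sum_i A_i)$.

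This gives the contradiction: $k+1 = \operatorname{rank}(P) = \dim \operatorname{ran}(P) \leq \dim \ker(\sum_i A_i) \leq k$. The only subtle point is the passage $P A_i P = 0 \Rightarrow A_i P = 0$, which is exactly the place where the hypothesis $A_i \geq 0$ does work that in Proposition \ref{projsum} was done for free by $P_i^2 = P_i$; everything else is essentially a rank count together with the definition of an anticlique.
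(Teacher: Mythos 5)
Your proof is correct and takes essentially the same approach as the paper: use the rank comparison to get $PA_iP = 0$, then the positive square-root trick to conclude $\operatorname{ran}(P) \subseteq \ker(\sum A_i)$, contradicting the dimension hypothesis. The only cosmetic difference is that you apply the square-root trick to each $A_i$ separately and then intersect the kernels, while the paper first sums to $P(\sum A_i)P = 0$ and applies it once to $\sum A_i$.
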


\begin{proof}
As in the proof of Proposition \ref{projsum}, if we assume that $P$ is a
quantum $(k+1)$-anticlique for $\mathcal{V}$ then comparing ranks shows that
$PA_iP = 0$ for all $i$. Thus $P(\sum A_i)P = 0$, which implies that
$(\sum A_i)^{1/2}P = 0$ and hence that $(\sum A_i)P = (\sum A_i)^{1/2}
(\sum A_i)^{1/2}P = 0$. This shows that ${\rm ran}(P)$ is contained in
${\rm ker}(\sum A_i)$, which contradicts the hypothesis
that ${\rm dim}({\rm ker}(\sum A_i)) \leq k$.
\end{proof}

Thus there are many operator systems of dimension $\lceil{\frac{n}{k}}\rceil$
which have no quantum $(k+1)$-anticliques. Indeed, if $A_1, \ldots, A_r$ are
positive matrices of rank $k$, where $r = \lceil{\frac{n}{k}}\rceil - 1$,
then {\it generically} the kernel of their sum will have dimension at
most $k$ and Proposition \ref{generic} will apply.

Now let us turn to lower bounds for $T^{\downarrow}(n,k)$. The next pair
of results are basically \cite[Theorems 3 and 4]{KLV}, with two small
improvements. For the reader's convenience I include the full proofs.

\begin{lemma}\label{klvlem}
Let $\mathcal{V}$ be an operator system in $M_n(\mathbb{C})$ and let
$d = {\rm dim}(\mathcal{V})$. Assume every matrix in $\mathcal{V}$ is
diagonal. If $(k-1)d + 1\leq n$ then $\mathcal{V}$ has a quantum
$k$-anticlique.
\end{lemma}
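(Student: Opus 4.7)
The natural approach is to reformulate the existence of a quantum $k$-anticlique, via Lemma \ref{vectorlemma}, as a ``common centroid'' problem in a low-dimensional real affine space, and then to invoke Tverberg's theorem. Fix a real basis $A_1 = I_n, A_2, \ldots, A_d$ of the Hermitian part $\mathcal{V}^h$, thought of as diagonal matrices. For each coordinate $l \in \{1, \ldots, n\}$ record the value of this basis at $l$ as a point
$$\alpha_l = (A_{1,ll}, A_{2,ll}, \ldots, A_{d,ll}) \in \Rb^d.$$
Since $A_1 = I_n$, the first coordinate of every $\alpha_l$ is $1$, so all $n$ points lie in a single affine hyperplane of dimension $d-1$. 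This is the crucial observation that matches the hypothesis: Tverberg's theorem inside a $(d-1)$-dimensional affine space requires $(k-1)((d-1)+1) + 1 = (k-1)d + 1$ points to produce a $k$-partition whose parts have a common point in their convex hulls, and this is exactly the quantity $n$ is assumed to dominate.

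Applying Tverberg's theorem to $\alpha_1, \ldots, \alpha_n$, I obtain disjoint nonempty sets $S_1, \ldots, S_k \subseteq \{1, \ldots, n\}$ and probability weights $\mu^{(i)}$ supported on $S_i$ such that the weighted centroids $\sum_{l \in S_i} \mu^{(i)}_l \alpha_l$ coincide; call their common value $c \in \Rb^d$. Define
$$v_i = \sum_{l \in S_i} \sqrt{\mu^{(i)}_l}\, e_l,$$
where $e_l$ is the standard basis vector of $\Cb^n$. Because the supports $S_i$ are disjoint and each $\mu^{(i)}$ is a probability distribution, the $v_i$ form an orthonormal $k$-tuple.

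It remains to verify the two conditions of Lemma \ref{vectorlemma} for any Hermitian $A = \sum_s \lambda_s A_s \in \mathcal{V}^h$. Since $A$ is diagonal, $\langle A v_i, v_j\rangle = \sum_l A_{ll}\, v_i(l)\, \overline{v_j(l)}$, which vanishes for $i \neq j$ because the supports are disjoint. On the diagonal, $\langle A v_i, v_i\rangle = \sum_{l \in S_i} \mu^{(i)}_l A_{ll} = \sum_s \lambda_s \sum_{l \in S_i} \mu^{(i)}_l A_{s,ll} = \vec{\lambda}\cdot c$, which is independent of $i$ by construction. Lemma \ref{vectorlemma} then produces the desired quantum $k$-anticlique.

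The main conceptual step is recognizing that this is a Tverberg problem, and the only nontrivial bookkeeping is noticing that the presence of $I_n \in \mathcal{V}$ confines the $\alpha_l$ to an affine hyperplane, which lowers the Tverberg threshold from $(k-1)(d+1)+1$ to $(k-1)d+1$; after that, the rest is routine unpacking of Lemma \ref{vectorlemma}.
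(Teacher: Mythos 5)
Your proof is correct and takes essentially the same route as the paper: record the diagonal entries as points in real space, invoke Tverberg's theorem to obtain a $k$-partition with a common convex-hull point, and form the vectors $v_i$ from square roots of the Tverberg weights to satisfy Lemma \ref{vectorlemma}. Your observation that the points lie in an affine hyperplane of dimension $d-1$ because $I_n \in \mathcal{V}$ is precisely the paper's device of dropping the first (constant) coordinate and working directly in $\mathbb{R}^{d-1}$, which is the ``small improvement'' over \cite{KLV} the paper singles out.
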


\begin{proof}
Write $\mathcal{V} = {\rm span}(A_1, \ldots, A_d)$ with each $A_i$ Hermitian
and $A_1 = I_n$. Then for each $1 \leq j \leq n$ let
$\vec{b}_j \in \mathbb{R}^{d-1}$ be the vector whose components are the
$(j,j)$ entries of $A_2$, $\ldots$, $A_d$. That is, $\vec{b}_j$ is the
sequence of eigenvalues of the $A_i$, excepting $A_1 = I_n$, belonging
to the $j$th standard basis vector $e_j$. By a theorem of Tverberg
\cite{T1, T2}, if $n \geq kd - (d-1) = (k-1)d + 1$, then the index set
$\{1, \ldots, n\}$ can be partitioned into $k$ blocks $S_1, \ldots, S_k$
such that the convex hulls of the sets $\{\vec{b}_j: j \in S_l\}
\subset \mathbb{R}^{d-1}$, for
$1 \leq l \leq k$, have nonempty intersection. That is, we can find a single
point $\vec{b} \in \mathbb{R}^{d-1}$ such that for each $1 \leq l \leq k$ some
convex combination $\sum_{j \in S_l} \mu_j\vec{b}_j$ equals $\vec{b}$.
Letting $v_l = \sum_{j \in S_l} \sqrt{\mu_j}e_j$, we then have that
$\langle A_iv_l, v_{l'}\rangle = 0$ whenever $l \neq l'$, for any $i$
(even $i = 1$), and if $i \neq 1$ then $\langle A_iv_l,v_l\rangle$ equals
the $i$th component of $\vec{b}$, while $\langle A_1v_l,v_l\rangle = 1$ for
any $l$. So $\mathcal{V}$ has a quantum $k$-anticlique by Lemma
\ref{vectorlemma}.
\end{proof}

\begin{theo}\label{klvthm}
Let $\mathcal{V}$ be an operator system in $M_n(\mathbb{C})$ and let
$d = {\rm dim}(\mathcal{V})$. If $(k-1)d + 1 \leq \lceil{\frac{n}{d-1}}\rceil$
then $\mathcal{V}$ has a quantum $k$-anticlique.
\end{theo}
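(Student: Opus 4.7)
The plan is to reduce to Lemma \ref{klvlem} by exhibiting an orthogonal projection $P \in M_n(\mathbb{C})$ of rank $m = \lceil n/(d-1)\rceil$ together with an orthonormal basis $v_1, \ldots, v_m$ of ${\rm ran}(P)$ in which $P\mathcal{V}P$ is diagonal. Once this is accomplished, the induced operator system $P\mathcal{V}P$ has dimension at most $d$, the hypothesis reads $(k-1)d + 1 \leq m$, and Lemma \ref{klvlem} applied inside $M_m(\mathbb{C})$ supplies a rank $k$ projection $Q \leq P$ with $Q(P\mathcal{V}P)Q = \mathbb{C}\cdot Q$. Because $QP = PQ = Q$, this is equivalent to $Q\mathcal{V}Q = \mathbb{C}\cdot Q$, so $Q$ is the desired quantum $k$-anticlique for $\mathcal{V}$.

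To build the $v_l$ I would proceed inductively. (The case $d = 1$ is trivial, so assume $d \geq 2$ and fix a Hermitian $B \in \mathcal{V}$ which is not a scalar multiple of $I_n$.) At stage $l$, set $W_l = \sum_{j < l}\mathcal{V}v_j$ and let $Q_l$ denote the orthogonal projection of $\mathbb{C}^n$ onto $W_l^\perp$. Choose $v_l$ to be a unit eigenvector of the Hermitian operator $Q_lBQ_l|_{W_l^\perp}$, which exists provided $\dim W_l < n$. The requirement $v_l \in W_l^\perp$ already yields $\langle Av_j, v_l\rangle = 0$ for every $A \in \mathcal{V}$ and $j < l$; the conjugate equations $\langle Av_l, v_j\rangle = 0$ follow because $A^* \in \mathcal{V}$ implies $A^*v_j \in W_l$, whence $\langle Av_l, v_j\rangle = \overline{\langle v_l, A^*v_j\rangle} = 0$. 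Thus the $v_l$ do make $P\mathcal{V}P$ diagonal in the basis $v_1, \ldots, v_m$.

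The crux is the dimension bound $\dim W_{l+1} \leq \dim W_l + (d-1)$. Writing $Q_lBv_l = \mu v_l$ gives $(B - \mu I_n)v_l \in W_l$; and since $B - \mu I_n$ is a nonzero element of $\mathcal{V}$, either $(B - \mu I_n)v_l$ is a nonzero vector in $\mathcal{V}v_l \cap W_l$, or $(B - \mu I_n)v_l = 0$ and the map $A \mapsto Av_l$ from $\mathcal{V}$ into $\mathbb{C}^n$ has nontrivial kernel, forcing $\dim(\mathcal{V}v_l) \leq d - 1$. In either case $\dim(\mathcal{V}v_l) - \dim(\mathcal{V}v_l \cap W_l) \leq d - 1$, which yields the bound. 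Starting from $W_1 = 0$, induction gives $\dim W_l \leq (l-1)(d-1)$, which stays strictly below $n$ precisely for $l \leq \lceil n/(d-1)\rceil$, so the construction reaches $l = m$.

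The main obstacle is organizing the eigenvector dichotomy so that both branches uniformly deliver the increment bound $d-1$; once this is recognized the rest is a clean induction plus the invocation of Lemma \ref{klvlem}. A minor additional point is that $P\mathcal{V}P$ may have dimension strictly less than $d$, but this only makes the hypothesis of Lemma \ref{klvlem} easier to satisfy.
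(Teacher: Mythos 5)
Your proposal is correct and takes essentially the same approach as the paper's proof: both iteratively pick $v_l$ as a unit eigenvector of the compression of a fixed non-scalar Hermitian in $\mathcal{V}$ to the orthocomplement of everything accumulated so far, yielding an orthonormal set of size $\lceil n/(d-1)\rceil$ in whose span $\mathcal{V}$ compresses to diagonal matrices, after which Lemma \ref{klvlem} applies. Your $W_l = \sum_{j<l}\mathcal{V}v_j$ is exactly the paper's $E_1 + \cdots + E_{l-1}$, and your two-case increment bound is a slight reorganization of the paper's observation that the map $A \mapsto P_{j-1}Av_j$ has nontrivial kernel because $P_{j-1}(A_2 - \lambda I_n)v_j = 0$.
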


\begin{proof}
We reduce to Lemma \ref{klvlem} by compressing $\mathcal{V}$ to an operator
system which contains only diagonal matrices. To do this, write $\mathcal{V} =
{\rm span}(A_1, \ldots, A_d)$ with each $A_i$ Hermitian and $A_1 = I_n$.
Start the construction by letting $v_1$ be a norm 1 eigenvector of $A_2$.
Then let $E_1 = \mathcal{V}v_1 = \{Bv_1: B \in \mathcal{V}\}$ and
let $P_1$ be the orthogonal projection onto $E_1^\perp$. Having constructed
$v_j$, $E_j$, and $P_j$, let $v_{j+1} \in {\rm ran}(P_j)$ be a norm 1
eigenvector for $P_jA_2P_j$, let $E_{j+1} = P_j\mathcal{V}v_{j+1}$, and let
$P_{j+1}$ be the orthogonal projection onto $(E_1 + \cdots + E_j)^\perp$.
Continue until all of $\mathbb{C}^n$ is exhausted.

Since $v_j$ is an eigenvector for $P_{j-1}A_2P_{j-1}$ (setting $P_0 = I_n$),
and also $P_{j-1}A_1P_{j-1}v_j = v_j$, it follows that the
dimension of
$E_j$ is at most $d-1$. Thus we have a sequence $(v_1, \ldots, v_r)$
with $r \geq \lceil{\frac{n}{d-1}}\rceil$. Also, by construction
$A_iv_j$ is orthogonal to $v_{j'}$ when $j < j'$, for any $i$.
Thus if $P$ is the orthogonal projection onto the span of the $v_j$'s,
then the matrices $PA_iP$ are diagonal with respect to the $v_j$ basis.
In other words, $P\mathcal{V}P$ satisfies the hypotheses of Lemma
\ref{klvlem} with $r \geq \lceil{\frac{n}{d-1}}\rceil$ in place of $n$.
So $(k-1)d + 1 \leq \lceil{\frac{n}{d-1}}\rceil$ implies that
$P\mathcal{V}P$ has a quantum $k$-anticlique, and hence that $\mathcal{V}$
does as well.
\end{proof}

The only novel aspects of these two proofs are (1) elimination of the
first coordinates of the vectors $\vec{b}_j$ in Lemma \ref{klvlem} and
(2) our choice of $v_j$ to be an eigenvector of $P_{j-1}A_2P_{j-1}$. Both
yield small improvements on the inequality that has to be assumed, meaning
that in both cases the inequality is slightly weakened.

Replacing $\lceil{\frac{n}{d-1}}\rceil$ with $\frac{n}{d-1}$ yields, if
anything, a stronger condition on $k$. So $(k-1)d + 1 \leq \frac{n}{d-1}$
implies that $\mathcal{V}$ has a quantum
$k$-anticlique. Substituting $k+1$ for $k$ and solving for $d$ yields the
condition $d \leq \frac{k-1 + \sqrt{(k+1)^2 + 4kn}}{2k}$ and thus any
operator system whose dimension is at most this value must have a quantum
$(k+1)$-anticlique. As
$$\sqrt{\frac{n}{k}} = \frac{\sqrt{4kn}}{2k} \leq
\frac{k-1 + \sqrt{(k+1)^2 + 4kn}}{2k},$$
$T^{\downarrow}(n,k)$ must be larger than this value.
Together with Corollary \ref{ltupper}, this yields the following estimate.

\begin{theo}\label{lowerest}
$\sqrt{\frac{n}{k}} < T^{\downarrow}(n,k) \leq \lceil{\frac{n}{k}}\rceil$.
\end{theo}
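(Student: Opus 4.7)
The upper bound $T^{\downarrow}(n,k)\leq \lceil n/k\rceil$ is a verbatim restatement of Corollary \ref{ltupper}, so the only real work is the strict lower bound $\sqrt{n/k} < T^{\downarrow}(n,k)$, and this is a direct algebraic consequence of Theorem \ref{klvthm}. The plan is to apply that theorem with $k+1$ in place of $k$, relax the ceiling to an inequality over the reals, and solve the resulting quadratic in $d = {\rm dim}(\mathcal{V})$.

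First I would observe that, for any operator system $\mathcal{V}\subseteq M_n(\mathbb{C})$ of dimension $d$, the weaker condition $kd + 1 \leq n/(d-1)$ (obtained from Theorem \ref{klvthm} by substituting $k+1$ for $k$ and replacing $\lceil n/(d-1)\rceil$ by $n/(d-1)$) still forces $\mathcal{V}$ to possess a quantum $(k+1)$-anticlique. Clearing the denominator reduces this to the quadratic inequality
$$kd^2 + (1-k)d - (n+1) \leq 0,$$
and an application of the quadratic formula, together with the identity $(k-1)^2 + 4k = (k+1)^2$, rewrites this as
$$d \leq \frac{(k-1) + \sqrt{(k+1)^2 + 4kn}}{2k}.$$
Hence every operator system whose dimension does not exceed the right-hand side admits a quantum $(k+1)$-anticlique.

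It then suffices to verify the elementary estimate
$$\sqrt{\frac{n}{k}} \;=\; \frac{\sqrt{4kn}}{2k} \;\leq\; \frac{(k-1) + \sqrt{(k+1)^2 + 4kn}}{2k},$$
which is immediate because $k \geq 1$ and $4kn \leq (k+1)^2 + 4kn$. Consequently every operator system in $M_n(\mathbb{C})$ of dimension at most $\sqrt{n/k}$ has a quantum $(k+1)$-anticlique, and the definition of $T^{\downarrow}(n,k)$ as the smallest dimension of a quantum $(k+1)$-anticliqueless operator system yields $T^{\downarrow}(n,k) > \sqrt{n/k}$.

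There is no serious obstacle: the substantive content is packaged in Theorem \ref{klvthm}, and the present theorem is essentially the numerical corollary one extracts by bounding the quadratic solution from below by $\sqrt{n/k}$. The only minor point worth keeping in mind is that the bound on $d$ is obtained by weakening $\lceil n/(d-1)\rceil$ to $n/(d-1)$, which is harmless since this only tightens the hypothesis and therefore cannot weaken the conclusion.
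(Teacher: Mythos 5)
Your proof is correct and follows the paper's argument essentially verbatim: the upper bound is exactly Corollary \ref{ltupper}, and the lower bound is obtained by weakening the ceiling in Theorem \ref{klvthm} to the plain ratio, substituting $k+1$ for $k$, and solving the resulting quadratic for $d$, then comparing to $\sqrt{n/k}$. The only difference is that you spell out the clearing of denominators and the identity $(k-1)^2+4k=(k+1)^2$, which the paper leaves implicit.
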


The more precise lower bound $\frac{k-1 + \sqrt{(k+1)^2 + 4kn}}{2k}$
is only marginally better than $\sqrt\frac{n}{k}$. But
when $k = 1$ it does improve $T^{\downarrow}(n,1) > \sqrt{n}$ to
$T^{\downarrow}(n,1) > \sqrt{n+1}$.

The obvious inefficiency in the proof of Theorem
\ref{klvthm}, where we start by compressing to a diagonal operator system,
plus the minimality demonstrated in Corollary \ref{mincor}, make it natural
to conjecture that the lower quantum Tur\'{a}n number $T^{\downarrow}(n,k)$
exactly equals $\lceil\frac{n}{k}\rceil$. When $n = 3$ and $k = 1$,
Proposition \ref{singlematrix} and Corollary \ref{ltupper} yield
$T^{\downarrow}(3,1) = 3$, so the first interesting case is
$n = 4$, $k = 1$, when Theorem \ref{lowerest} yields
$3 \leq T^{\downarrow}(4,1) \leq 4$ and the natural conjecture is
$T^{\downarrow}(4,1) = 4$, i.e., that every three-dimensional operator system
in $M_4(\mathbb{C})$ has a quantum $2$-anticlique.
But even this special case seems hard. I have only been able to prove
two partial positive results. The first is an immediate consequence of
either Corollary \ref{mincor} or Lemma \ref{klvlem}. (It can also be inferred
from Theorem \ref{m4thm} below.)

\begin{prop}
Let $\mathcal{V}$ be an operator system in $M_4(\mathbb{C})$ consisting of diagonal
matrices, and whose dimension is at most $3$. Then $\mathcal{V}$ has a
quantum $2$-anticlique.
\end{prop}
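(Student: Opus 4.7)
The plan is to observe that this proposition is an immediate consequence of Lemma~\ref{klvlem}. Setting $n = 4$, $k = 2$, and $d = \dim(\mathcal{V}) \leq 3$, the arithmetic hypothesis $(k-1)d + 1 \leq n$ reduces to $d + 1 \leq 4$, which holds by assumption. The other hypothesis of the lemma, that every matrix in $\mathcal{V}$ be diagonal, is given outright. So the entire proof is a one-line verification of these two numerical conditions followed by an appeal to Lemma~\ref{klvlem}.

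As an alternative (and conceptually pleasing) route, one can read off the conclusion from Corollary~\ref{mincor}. The full diagonal subalgebra $D_4 \subseteq M_4(\mathbb{C})$ is spanned by the four rank-$1$ projections $E_{11}, E_{22}, E_{33}, E_{44}$, which sum to $I_4$; with the value $k = 1$ in the corollary we have $r = 4 = \lceil n/k \rceil$, putting $D_4$ in the extremal form needed. Any diagonal operator system of dimension at most $3$ is a proper sub-operator-system of $D_4$, so Corollary~\ref{mincor} immediately yields a quantum $(1+1) = 2$-anticlique.

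There is no real obstacle here; the statement is essentially just a sanity check that the general machinery of Section~2 is tight enough to settle the diagonal case of $T^{\downarrow}(4,1)$. If I wanted to write out one derivation in detail, I would choose Lemma~\ref{klvlem}, because it makes the underlying combinatorial mechanism transparent: Tverberg's theorem partitions the four points of $\mathbb{R}^{d-1}$ recording the non-identity diagonal entries at the four basis vectors $e_1,\ldots,e_4$ into two blocks whose convex hulls meet, and one then assembles one unit vector from each block, as in the proof of Lemma~\ref{klvlem}, to produce the orthonormal pair required by Lemma~\ref{vectorlemma}.
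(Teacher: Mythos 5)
Your proposal is correct and matches the paper exactly: the paper itself states (just before the proposition) that it is an immediate consequence of either Corollary~\ref{mincor} or Lemma~\ref{klvlem}, and you have simply spelled out both of those one-line derivations with the right numerical substitutions ($k=2$, $d\le 3$, $n=4$ for the lemma; $k=1$, $r=\lceil 4/1\rceil=4$ for the corollary).
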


The other partial result is more substantive. Its content resides almost
entirely in the next lemma, which is a slightly modified version of a
theorem of Bryant \cite{B1}.

\begin{lemma}\label{rank2lemma}
Let $B_1, B_2, B_3 \in M_2(\mathbb{C})$ with $B_1$ and $B_3$ Hermitian and let $a \geq 1$.
Then there exist $\lambda \in [0,1]$ and $U \in SU(2)$ such that
$$SB_1S + CUB_2S + SB_2^*U^*C + CUB_3U^*C$$
is a scalar multiple of $I_2$, where $S = {\rm diag}(\sqrt{\lambda},
\sqrt{\lambda/a})$ and $C = {\rm diag}(\sqrt{1-\lambda},
\sqrt{1 - \lambda/a})$.
\end{lemma}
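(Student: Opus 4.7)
My first step is to recognize the expression as a compression. Set $M = \begin{pmatrix} B_1 & B_2^* \\ B_2 & B_3 \end{pmatrix} \in M_4(\mathbb{C})$; this is Hermitian since $B_1, B_3$ are. Let $V(\lambda, U) = \begin{pmatrix} S \\ U^* C \end{pmatrix}$, a $4 \times 2$ matrix. The identity $S^2 + C^2 = I_2$ (check entry-wise: $\lambda + (1-\lambda) = 1$ and $\lambda/a + (1 - \lambda/a) = 1$) implies $V^*V = S^2 + C U U^* C = I_2$, so $V$ is an isometry. A direct block computation identifies the expression in the lemma with $V(\lambda, U)^* M V(\lambda, U)$. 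The goal thus becomes: among the $4$-real-parameter family of isometries $V(\lambda, U)$ (with $\lambda \in [0,1]$ supplying one parameter and $U \in SU(2)$ supplying three), find one whose compression of $M$ is a scalar matrix.

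To motivate the count, being a scalar matrix is a codimension-$3$ condition on the $4$-dimensional real space of $2 \times 2$ Hermitian matrices ($E_{11} = E_{22}$ is one real equation, $E_{12} = 0$ is two more). So $4$ parameters against $3$ conditions puts us in the generic-existence regime. To convert this heuristic into a proof I would work with the continuous map
$$\phi : [0,1] \times SU(2) \longrightarrow \mathbb{R}^3,\qquad \phi(\lambda, U) = \text{Pauli coordinates of the traceless part of } V(\lambda, U)^* M V(\lambda, U),$$
and show $0 \in \phi([0,1] \times SU(2))$ by a topological degree argument. The domain is a compact $4$-manifold with two boundary components, each a copy of $S^3 \cong SU(2)$. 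At $\lambda = 0$ the expression reduces to $U B_3 U^*$, whose traceless part, as $U$ varies, sweeps out a round $2$-sphere in $\mathbb{R}^3$ centered at the origin with radius half the spectral gap of $B_3$; a symmetric (if more involved) picture holds at $\lambda = 1$. One then tries to show that some relative homotopy/degree invariant of $\phi$ on the boundary obstructs nowhere-vanishing extensions, forcing a zero in the interior.

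The main obstacle is the degenerate boundary cases. If $B_3$ is a scalar then the $\lambda = 0$ boundary image collapses to a point and the simple "sphere" picture breaks; analogous collapses can occur at $\lambda = 1$ when $B_1$ is scalar or when $a = 1$ causes $S, C$ to degenerate. I would handle this by first proving the lemma assuming $B_1, B_3$ have distinct eigenvalues and $a > 1$ (so the boundary spheres and interior map are generic), then extend by continuity: the set of $(B_1, B_2, B_3, a)$ for which a solution $(\lambda, U)$ exists is closed (by compactness of $[0,1] \times SU(2)$), and the generic case is dense, so closure gives the full statement.

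An alternative, and probably cleaner, route is to invoke Bryant's theorem \cite{B1} directly. The lemma advertises itself as a "slightly modified version" of that theorem, so the intended proof is almost certainly a short bookkeeping translation: identify the form of $V(\lambda, U)$ and the ambient compression problem in \cite{B1}, verify that the modification (the particular parameterization using $a \geq 1$ and the Hermitian assumption on $B_1, B_3$ rather than all three blocks) is within the scope of that theorem, and quote the existence conclusion. I expect the bulk of the work in the paper itself is carried out in \cite{B1}, and the proof here will consist largely of matching notation and noting what, if anything, needs reworking when $B_2$ is dropped from the Hermitian hypothesis.
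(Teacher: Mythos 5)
Your framework is right in spirit and in fact matches the paper's: treat the expression as a compression $V^*MV$ by the isometry $V = \begin{pmatrix} S \\ U^*C \end{pmatrix}$, pass to the traceless part (your $\phi$, the paper's $g\circ f_\lambda$), and argue by a homotopy obstruction over the cylinder $[0,1]\times SU(2)$. The paper does exactly this: if the map never vanishes one normalizes to get maps $\tilde F_\lambda\colon SU(2)\to S^2$, and $\tilde F_0$ is the Hopf map (homotopically nontrivial). So far so good.

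The genuine gap is the $\lambda=1$ end. You write that a \emph{symmetric picture} holds there, and that the $\lambda=1$ boundary image should be another round sphere. That is false, and it is precisely the asymmetry at $\lambda=1$ that drives the proof. When $\lambda=1$ we have $S={\rm diag}(1,1/\sqrt a)$, which is not the identity, and $C={\rm diag}(0,\sqrt{1-1/a})$, which is \emph{not} zero unless $a=1$; so $f_1$ still depends on $U$, and it is not conjugation of a fixed Hermitian matrix by $U$. The paper's key observation is structural: $F_1(U)$ is a constant (from $SB_1S$) plus a term that is real-linear in the entries of $U$ (from $CUB_2S + SB_2^*U^*C$) plus a term landing in $\mathbb{R}\oplus 0$ (from $CUB_3U^*C$, since $C$ has rank one). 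This makes $X=\{U\in SU(2): F_1(U)\in\mathbb{R}\oplus 0\}$ the intersection of $S^3$ with an affine real subspace of dimension $\ge 2$, hence connected; its image in $\mathbb{R}\oplus 0$ is then a connected set avoiding $0$, so $\tilde F_1$ misses a point of $S^2$ and is null-homotopic. Without this (or some substitute), your obstruction argument has no content: you've identified a nontrivial class at one end but have said nothing about the other end. Your ``degree invariant obstructs nowhere-vanishing extensions'' is a hope, not an argument.

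Two smaller points. The degeneracy handling in the paper is simpler than your density-and-closure plan: the only degenerate case that matters is $B_3$ scalar, in which case $\lambda=0$ works outright; the reduction to $B_3={\rm diag}(1,0)$ then disposes of the rest. Your ``prove generically, then take closures'' would need you to verify that solutions persist under limits and that generic data are dense in a way compatible with the normalizations — doable, but more work than the direct case split. Finally, the ``just cite Bryant'' alternative isn't available to you as a proof; the paper cites \cite{B1} as the source of the idea but still presents a complete self-contained argument, because the ``slight modification'' (the $a$-dependent $S$, $C$ and the specific block shape) is exactly the part that has to be reverified — and it's where the $\lambda=1$ structure comes from.
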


\begin{proof}
For any unit vector $\vec{z} = (z_0,z_1) \in \mathbb{C}^2$ we
have a special unitary matrix $U_{\vec{z}} =
\left[\begin{matrix}z_0&-\bar{z}_1\cr
z_1&\bar{z}_0\end{matrix}\right]$, and this identifies the 3-sphere
$S^3$ with $SU(2)$. Define $f_\lambda: SU(2) \to M_2(\mathbb{C})^h$ by
$f_\lambda(U) = SB_1S + CUB_2S + SB_2^*U^*C + CUB_3U^*C$, with $S$ and $C$ as
given above. (Recall that $M_2(\mathbb{C})^h$ is the Hermitian part of $M_2(\mathbb{C})$.)
Also define $g: M_2(\mathbb{C})^h \to \mathbb{R}\oplus \mathbb{C}$ by
$$g(A) = (a_{11} - a_{22}, 2a_{12})$$
where
$A = \left[\begin{matrix}a_{11}& a_{12}\cr a_{21}& a_{22}\end{matrix}\right]$.
Note that $g$ is real-linear and
$g(A) = 0$ if and only if $A$ is a scalar multiple of $I_2$.
Finally, let $F_\lambda: SU(2) \to \mathbb{R}\oplus \mathbb{C}$
be the map $F_\lambda = g\circ f_\lambda$.

If $B_3$ is a scalar multiple of $I_2$ then $f_0(U) = UB_3U^*$ is a scalar
multiple of $I_2$ for any $U \in SU(2)$, and we are done. So assume this is
not the case.
Since $S^2 + C^2 = I_2$, adding a scalar multiple of $I_2$ to both
$B_1$ and $B_3$ does not change the problem, so we can assume one of the
eigenvalues of $B_3$ is $0$. Multiplying $B_1$, $B_2$, and $B_3$ by a
nonzero scalar,
we can assume the other eigenvalue is $1$. We can then find $V \in SU(2)$
such that $VB_3V^* = {\rm diag}(1,0)$, and if $U$ solves the problem for
$B_1$, $VB_2$, and $VB_3V^*$ then $UV$ solves the problem for $B_1$, $B_2$,
and $B_3$. So we may assume $B_3 = {\rm diag}(1,0)$.

To reach a contradiction, suppose $F_\lambda(U) \neq 0$ for all
$\lambda \in [0,1]$ and $U \in SU(2)$. Then we can define
$\tilde{F}_\lambda: SU(2) \to S^2 \subset \mathbb{R}\oplus \mathbb{C}
\cong \mathbb{R}^3$ by
$\tilde{F}_\lambda(U) = \frac{F_\lambda(U)}{|F_\lambda(U)|}$.

The family of maps $\tilde{F}_\lambda$ constitutes a homotopy from
$\tilde{F}_0$ to $\tilde{F}_1$. Now
$S = 0$ and $C = I_2$ when $\lambda = 0$, so that $f_0(U) = UB_3U^*$.
Recalling that we have reduced to the case where $B_3 = {\rm diag}(1,0)$,
a short computation shows that $\tilde{F}_0(U_{\vec{z}}) = F_0(U_{\vec{z}}) =
(|z_0|^2 - |z_1|^2, 2z_0\bar{z}_1)$, i.e., it is the Hopf map from $S^3$
to $S^2$.

This map is homotopically nontrivial, so
to generate a contradiction we need only to show that $\tilde{F}_1$ is
null homotopic. When $\lambda = 1$ we have $C = {\rm diag}(0,a')$
with $a' = \sqrt{1 - 1/a}$. So $F_1(U) \in \mathbb{R} \oplus
\mathbb{C}$ is a constant (namely $g(SB_1S)$) plus something real-linear in
the entries of $U$ (namely $g(CUB_2S + SB_2^*U^*C)$) plus
something in $\mathbb{R} \oplus 0$ (namely $g(CUB_3U^*C)$). Letting
$X = \{U \in SU(2): F_1(U)\in \mathbb{R} \oplus 0\}$, it follows that $X$ is
the intersection of $SU(2) \cong S^3$ with an affine real-linear subspace
of $\left\{\left[\begin{matrix}\alpha&-\bar{\beta}\cr\beta&\bar{\alpha}\end{matrix}\right]: \alpha,\beta \in \mathbb{C}\right\} \cong \mathbb{R}^4$
whose real dimension is at least $2$. Thus $X$ is connected, and therefore its
image under $F_1$
in $\mathbb{R} \oplus 0$ is connected. Since this image does not
contain $0$, it must therefore lie entirely in $(0,\infty) \oplus 0$ or
$(-\infty,0) \oplus 0$; in either case, the image of $\tilde{F}_1$ cannot
be all of $S^2$ and so $\tilde{F}_1$ must be null homotopic. This contradicts
the homotopic nontriviality of $\tilde{F}_0$, and we conclude that
$g(f_\lambda(U)) = F_\lambda(U) \in \mathbb{R} \oplus \mathbb{C}$ must be
$0$ for some $\lambda \in [0,1]$ and $U \in SU(2)$. So $f_\lambda(U)$ is
a scalar multiple of $I_2$ for this $\lambda$ and $U$.
\end{proof}

\begin{theo}\label{m4thm}
Let $A,B \in M_4(\mathbb{C})$ be Hermitian and assume $A$ has a repeated
eigenvalue.
Then $\mathcal{V} = {\rm span}(I_4,A,B)$ has a quantum $2$-anticlique.
\end{theo}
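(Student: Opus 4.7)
The plan is to reduce to cases dictated by the spectral structure of $A$ and then invoke Lemma~\ref{rank2lemma} in each. Since $\mathcal{V}$ is unchanged if $A$ is replaced by $A - \mu I_4$, I would first assume the repeated eigenvalue is $0$; let $E$ denote its $2$-dimensional kernel and $F = E^\perp$. After choosing an orthonormal basis $\{e_1, e_2\}$ of $E$ and $\{f_1, f_2\}$ of $F$ that diagonalizes $A|_F$, write $A = \left[\begin{smallmatrix} 0 & 0 \\ 0 & A_F \end{smallmatrix}\right]$ with $A_F = {\rm diag}(\alpha, \beta)$.

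If $A$ has rank at most $1$ (so $\alpha = 0$ or $\beta = 0$), then $\ker A$ is at least $3$-dimensional; compressing $B$ to $\ker A$ yields a Hermitian $B' \in M_r(\mathbb{C})$ with $r \geq 3$, and Proposition~\ref{singlematrix} applied to ${\rm span}(I_r, B')$ produces a quantum $2$-anticlique inside $\ker A$, which serves $\mathcal{V}$ since $A$ compresses to zero there. If $\alpha = \beta \neq 0$, so $A_F = \alpha I_2$, I would apply Lemma~\ref{rank2lemma} with $a = 1$ to the block decomposition of $B$ relative to $E \oplus F$. This produces $\lambda \in [0,1]$ and $U \in SU(2)$ making $W^* B W$ a scalar multiple of $I_2$; since $a = 1$ gives $C = \sqrt{1 - \lambda}\, I_2$, the compression $W^* A W = \alpha(1 - \lambda) I_2$ is automatically scalar, and the range of $W$ is the desired $2$-anticlique.

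The substantive case is when $\alpha \neq \beta$ are both nonzero. Here I would apply Lemma~\ref{rank2lemma} with $a = \alpha/\beta$ (after scaling and, if needed, shifting $A$ within ${\rm span}(I_4, A) \subseteq \mathcal{V}$ to arrange $\alpha \geq \beta > 0$). With this tuning, $S^2 = (\lambda/\alpha) A_F$ and $C^2 = I_2 - (\lambda/\alpha) A_F$, so both $S$ and $C$ commute with $A_F$. Consequently $W^* A W = C U A_F U^* C$ is a scalar multiple of $I_2$ precisely when $U$ commutes with $A_F$, i.e.\ when $U$ is diagonal in the $A_F$-eigenbasis, together with the specific value $\lambda = \alpha/(\alpha+\beta)$. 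The aim is to extract from the lemma a pair $(\lambda, U)$ simultaneously making $W^*AW$ and $W^*BW$ scalar.

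The hardest step is forcing Lemma~\ref{rank2lemma} to yield a $U$ inside the diagonal subgroup of $SU(2)$. Its stated form only supplies some $U \in SU(2)$, and a generic such $U$ produces a nonzero off-diagonal entry $\sqrt{(1-\lambda)(1-\lambda/a)}(\alpha - \beta) z_0 \bar{z}_1$ in $W^*AW$, where $U = \left[\begin{smallmatrix} z_0 & -\bar{z}_1 \\ z_1 & \bar{z}_0 \end{smallmatrix}\right]$. To sidestep this I would apply Lemma~\ref{rank2lemma} not to $B$ but to the modified block matrix obtained by replacing $B_3$ with $B|_F + t A_F$ for a real parameter $t$, so that scalarity of $W^*(B + tA)W$ combines with the $a$-induced commutation of $C$ and $A_F$ to pin down $W^*AW$ and $W^*BW$ individually; a continuity or fixed-point argument in $t$ is what I expect to close the loop, and is the delicate point of the proof.
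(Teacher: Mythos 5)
Your case split and the decision to lean on Proposition~\ref{singlematrix} and Lemma~\ref{rank2lemma} match the paper's strategy, and your triple-eigenvalue (rank $\le 1$) case and the $\alpha=\beta$ case are handled correctly. But there are two genuine gaps.

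\textbf{First, the opposite-sign case is not actually covered.} You claim that ``scaling and, if needed, shifting $A$ within ${\rm span}(I_4,A)$'' lets you arrange $\alpha\ge\beta>0$. Once you have fixed the repeated eigenvalue to be $0$ (which your whole setup requires, since $E=\ker A$), the only remaining freedom is multiplication by a nonzero real scalar, and that cannot change the relative sign of $\alpha$ and $\beta$: if $\alpha>0>\beta$ the repeated eigenvalue $0$ lies strictly between them, and that stays true under any admissible normalization. So the case $\alpha\beta<0$ never reaches a legal $a=\alpha/\beta\ge 1$. The paper disposes of this case by a completely different device: writing $A={\rm diag}(a,-b,0,0)$ with $1/a+1/b=1$, it builds an explicit $4\times 3$ isometry $W$ with $W^*W=I_3$ and $W^*AW=0$, and then applies Proposition~\ref{singlematrix} to ${\rm span}(I_3,W^*BW)$ in $M_3(\Cb)$. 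That reduction is missing from your argument.

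\textbf{Second, in the same-sign case you put the blocks the wrong way around, and the proposed fix does not close.} You place the repeated kernel $E$ in the first ($S$) slot and $A_F={\rm diag}(\alpha,\beta)$ in the second ($C$) slot, so the compression of $A$ is $CUA_FU^*C$, which, as you correctly observe, is not scalar for the generic $U$ the lemma hands you. Your workaround---replacing $B_3$ by $B|_F+tA_F$ and running a continuity/fixed-point argument in $t$---is exactly the ``delicate point'' you flag, and as written it is not a proof: Lemma~\ref{rank2lemma} gives no control over which $(\lambda,U)$ it returns, so there is no mechanism forcing the parameter path to pass through a $U$ in the diagonal torus with $\lambda=\alpha/(\alpha+\beta)$. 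The clean resolution, which is what the paper does, is simply to swap the blocks: normalize $A={\rm diag}(1,a,0,0)$ with $a\ge 1$ so that $A_F$ occupies the $S$-block. Then $S={\rm diag}(\sqrt\lambda,\sqrt{\lambda/a})$ is built precisely so that $S^2\,{\rm diag}(1,a)=\lambda I_2$, hence $W^*AW=SA_FS=\lambda I_2$ is scalar \emph{automatically}, for every $\lambda$ and every $U$, and the lemma's conclusion on $B$ alone finishes the proof. No perturbation in $t$ is needed.
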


\begin{proof}
If $A$ has a triple eigenvalue then there is a rank $3$ orthogonal projection
$P$ such that $PAP$ is a scalar multiple of $P$. We can then identify
$PM_4(\mathbb{C})P$ with $M_3(\mathbb{C})$ and invoke
Proposition \ref{singlematrix} to infer that ${\rm span}(I_3,PBP)$ has a
quantum $2$-anticlique $Q$. This $Q$ will then be a quantum $2$-anticlique
for $\mathcal{V}$.

So assume $A$ has an eigenvalue of multiplicity exactly $2$. By adding a
scalar multiple of $I_4$ to $A$, we can assume that this eigenvalue is $0$.
There are now two cases to consider. First, suppose the two nonzero eigenvalues
of $A$ have opposite sign. Without loss of generality say
$A = {\rm diag}(a,-b,0,0)$ with $a,b > 0$. Multiplying $A$ by a nonzero
scalar, we can also assume that $\frac{1}{a} + \frac{1}{b} = 1$. Then let
$W = \left[\begin{matrix}\frac{1}{\sqrt{a}}&0&0\cr
\frac{1}{\sqrt{b}}&0&0\cr
0&1&0\cr
0&0&1
\end{matrix}\right]$, so that $W^*W = I_3$ and $W^*AW = 0$. Again by
Proposition \ref{singlematrix},
${\rm span}(I_3, W^*BW) \subset M_3(\mathbb{C})$ has a quantum $2$-anticlique $Q$,
and $P = WQW^*$ is then a quantum $2$-anticlique for $\mathcal{V}$.

In the other case, the two nonzero eigenvalues of $A$ have the same sign.
Multiplying by a scalar and diagonalizing, we can assume that
$A = {\rm diag}(1,a,0,0)$ with $a \geq 1$. In this basis write
$B = \left[\begin{matrix}B_1&B_2^*\cr
B_2&B_3
\end{matrix}\right]$ with $B_1, B_2,B_3 \in M_2(\mathbb{C})$ and $B_1$ and $B_3$
Hermitian. Then find $\lambda$ and $U$ as in Lemma \ref{rank2lemma} and
define
$$P = \left[\begin{matrix}S^2&SCU\cr
U^*SC&U^*C^2U
\end{matrix}\right],$$
with $S$ and $C$ as in the statement of that lemma. A computation
now shows that both $PAP$ and $PBP$ are scalar multiples of $P$. To see
that ${\rm rank}(P) = 2$, observe that $P$ is unitarily conjugate to
$\left[\begin{matrix}S^2&SC\cr SC&C^2\end{matrix}\right]$, which after
interchanging the middle two basis vectors is the
direct sum of $\left[\begin{matrix}\sqrt{\lambda}\cr\sqrt{1-\lambda}\end{matrix}\right]\left[\begin{matrix}\sqrt{\lambda}&\sqrt{1-\lambda}\end{matrix}\right]$
and $\left[\begin{matrix}\sqrt{\lambda/a}\cr\sqrt{1-\lambda/a}\end{matrix}\right]\left[\begin{matrix}\sqrt{\lambda/a}&\sqrt{1-\lambda/a}\end{matrix}\right]$.
\end{proof}

In other words, any three-dimensional operator system in $M_4(\mathbb{C})$ has a
quantum $2$-anticlique provided it contains a nonscalar matrix that has
a repeated eigenvalue. Unfortunately, for generic Hermitian $A,B \in M_4(\mathbb{C})$
the operator system ${\rm span}(I_4,A,B)$ does not have this property
\cite{B2}.

\section{Upper quantum Tur\'{a}n numbers}

We define the {\it upper quantum Tur\'{a}n number} $T^{\uparrow}(n,k)$
to be the largest number $d$ such that some operator system in $M_n(\mathbb{C})$
whose dimension is $d$ has no quantum $(k+1)$-cliques.
As before, we restrict attention to the case $k \geq 1$.

Evaluating $T^{\uparrow}(n,k)$ and $T^{\downarrow}(n,k)$ are very different
problems. In general there is no natural ``quantum'' analog of edge
complementation which would interchange quantum cliques and anticliques.
In finite dimensions we can consider the orthocomplement $\mathcal{V}^{\perp}$
of an operator system $\mathcal{V} \subseteq M_n(\mathbb{C})$ relative to the
Hilbert-Schmidt inner product $\langle A,B\rangle = {\rm tr}(AB^*)$, but
it will not contain $I_n$. In order to produce a
``complementary'' operator system we could
define $\mathcal{V}^\dag = \mathcal{V}^{\perp} + \mathbb{C}\cdot I_n$,
and this is a genuine complementation operation in the sense that
$\mathcal{V}^{\dag\dag} = \mathcal{V}$. This operation transforms quantum
anticliques into quantum cliques, but not vice versa (incidentally
making precise the idea that anticliques are more special than cliques).
We can infer from this fact that
$T^{\uparrow}(n,k) \leq n^2 + 1 - T^{\downarrow}(n,k)$, but this upper
bound is terrible compared to the one proven below.\footnote{Maybe {\it quantum clique for $\mathcal{V}$} should be redefined to simpy mean {\it quantum anticlique for $\mathcal{V}^\dag$}? This would automatically introduce a symmetry between quantum cliques and quantum anticliques, but it suffers from two drawbacks: first, it does not generalize to the infinite-dimensional setting, and second, the quantum Ramsey theorem from \cite{W3} would fail. According to \cite[Proposition 2.1]{W3} the diagonal operator system $D_n$ has no quantum $2$-anticliques, but $D_n^\dag$ also has no quantum $2$-anticliques. (Suppose $P$ is a quantum $2$-anticlique for $D_n^\dag$. Let $v_1$ and $v_2$ be orthonormal vectors in ${\rm ran}(P)$ and consider the operator $A: v \mapsto \langle v,v_1\rangle v_2$. Then $A = PAP$ and ${\rm tr}(A) = 0$, so that ${\rm tr}(AB^*) = {\rm tr}(APB^*P) = 0$ for all $B \in D_n^\dag$, which implies that $A \in D_n$. But $A$ cannot belong to $D_n$ because it does not commute with $A^*$, contradiction.)}

For $k = 1$, evaluation of $T^{\uparrow}(n,1)$ is not trivial, but it is
completely solved:

\begin{theo}\label{citedthm}
(\cite[Theorem 3.3]{W3}) For any $n \geq 2$, $T^{\uparrow}(n,1) = 3$.
\end{theo}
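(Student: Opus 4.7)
The plan has two directions. The lower bound $T^{\uparrow}(n,1)\ge 3$ is immediate: any $3$-dimensional operator system $\mathcal{V}$ has $\dim(P\mathcal{V}P) \le \dim \mathcal{V} = 3 < 4$ for every rank-$2$ projection $P$, so no such $P$ can be a quantum $2$-clique; a concrete witness is $\mathrm{span}(I_n, E_{12}+E_{21}, i(E_{12}-E_{21}))$ for $n \ge 2$.

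The substance is the upper bound: every operator system of dimension at least $4$ contains a quantum $2$-clique. It suffices to handle dimension exactly $4$, since any operator system $\mathcal{V}$ of dimension $d \ge 4$ contains a $4$-dimensional sub-operator-system (pick any $3$-dimensional real subspace of $\mathcal{V}^h$ complementary to $\mathbb{R} I_n$, add $\mathbb{R} I_n$, and complexify), and a quantum $2$-clique for the sub-system is automatically one for $\mathcal{V}$.

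Write $\mathcal{V} = \mathrm{span}(I_n, A, B, C)$ with $A, B, C$ Hermitian. The condition that $P = v_1 v_1^* + v_2 v_2^*$ be a quantum $2$-clique says the four compressions $I_2, [A]_V, [B]_V, [C]_V$ span $M_2(\mathbb{C})$; after subtracting traces and using Pauli coordinates to identify traceless Hermitian $2 \times 2$ matrices with $\mathbb{R}^3$, this becomes: find orthonormal $(v_1, v_2) \in \mathbb{C}^n$ such that the three vectors $T_A, T_B, T_C \in \mathbb{R}^3$ are $\mathbb{R}$-linearly independent. I would split on whether $\mathcal{V}$ is commutative. In the non-commutative case, fix a non-scalar Hermitian $A' \in \mathrm{span}(A, B, C)$ and a unit eigenvector $v_1$ of $A'$ that is not simultaneously an eigenvector of all other elements of $\mathcal{V}^h$; for generic $v_2 \in v_1^{\perp}$ with $\langle A' v_2, v_2 \rangle$ different from the $A'$-eigenvalue of $v_1$, the compression $[A']_V$ contributes a nontrivial diagonal, and the off-diagonals $\langle B v_1, v_2 \rangle, \langle C v_1, v_2 \rangle$ are $\mathbb{R}$-linearly independent complex numbers provided $\pi_{v_1^{\perp}}(B v_1)$ and $\pi_{v_1^{\perp}}(C v_1)$ are not $\mathbb{R}$-parallel---equivalently, provided $v_1$ is not a common eigenvector of $A'$ and some real combination $\alpha B + \beta C$. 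In the commutative case, one may simultaneously diagonalize so that $\mathcal{V} \subseteq D_n$ in some orthonormal basis (forcing $n \ge 4$, since $\dim D_3 = 3$); then one constructs a tight frame $p_1, \ldots, p_n \in \mathbb{C}^2$ with $\sum p_i p_i^* = I_2$ whose Bloch vectors $\vec r_i \in \mathbb{R}^3$ are arranged so that the map $d \mapsto \sum d_i \vec r_i$ is injective on the $3$-dimensional trace-zero subspace of $\mathcal{V}^h \subseteq \mathbb{R}^n$ (tetrahedral or SIC configurations supply such frames for $n = 4$, and generic frames for $n > 4$), and recovers the desired pair by $v_{k, i} = \overline{(p_i)_k}$.

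The main obstacle is the non-commutative case: rigorously producing an eigenvector $v_1$ of some $A' \in \mathrm{span}(A, B, C)$ that is not ``bad'' in the sense above. Heuristically, if every eigenvector of every non-scalar Hermitian in $\mathrm{span}(A, B, C)$ were bad, then $A, B, C$ would be forced to share a joint eigenbasis and hence commute, contradicting the standing assumption; but making this precise is delicate because the coefficients $(\alpha, \beta) \in \mathbb{R}^2$ witnessing badness may depend on the eigenvector. A clean formulation would be to show that the polynomial $(v_1, v_2) \mapsto \det[T_A \mid T_B \mid T_C]$ on the Stiefel manifold $V_2(\mathbb{C}^n)$ does not vanish identically whenever $\mathcal{V}$ is non-commutative, ideally by exhibiting a single orthonormal pair at which the $3 \times 3$ determinant is nonzero, leveraging the non-commutation of two specific Hermitian generators.
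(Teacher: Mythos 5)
The statement in question is not actually proved in this paper: the inequality $T^{\uparrow}(n,1)<4$ is cited verbatim from \cite[Theorem 3.3]{W3}, and the only original content here is the parenthetical observation that the trivial bound $T^{\uparrow}(n,k)\ge (k+1)^2-1$ supplies the reverse inequality. Your treatment of that lower bound is correct and essentially the same as the paper's.

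Your attempt to reprove the cited upper bound, however, has a genuine gap, which you yourself flag. The reduction to dimension exactly $4$, the Pauli-coordinate reformulation (find orthonormal $v_1,v_2$ making the traceless parts of $[A],[B],[C]$ linearly independent in $\Rb^3$), and the correct characterization of ``bad'' $v_1$'s (those with $\dim\{M\in\mathrm{span}_\Rb(A,B,C):Mv_1\in\Cb v_1\}\ge 2$) are all fine. But the step that actually needs to be proved --- that when $A,B,C$ do not all commute, some non-scalar $A'\in\mathrm{span}_\Rb(A,B,C)$ has an eigenvector $v_1$ for which $W_{v_1}$ is exactly one-dimensional, or equivalently that the determinant function on the Stiefel manifold is not identically zero --- is never established. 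Your heuristic (``if every eigenvector of every non-scalar element were bad, then $A,B,C$ would commute'') is plausible but you offer no argument, and the $(\alpha,\beta)$ witnessing badness can vary from eigenvector to eigenvector, so the conclusion does not follow by any obvious compactness or algebraic-dependence argument. The commutative case is also stated too loosely: asserting the existence of a tight frame whose Bloch vectors make $d\mapsto\sum d_i\vec r_i$ injective on a prescribed $3$-dimensional subspace of $\Rb^n$ needs a concrete construction or at least a genericity argument, and ``generic frames'' is not a proof. In short, the hard direction of the theorem remains unproved in your write-up; since the paper itself defers to \cite{W3} for precisely this, you should either cite that source or fill both holes.
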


(The cited result only states that $T^{\uparrow}(n,1) < 4$, but the
reverse inequality follows from the trivial lower bound
$T^{\uparrow}(n,k) \geq (k+1)^2 - 1$. If ${\rm dim}(\mathcal{V}) < (k+1)^2$
then $\mathcal{V}$ obviously cannot have any quantum $(k+1)$-cliques.)

In contrast, it follows from \cite[Proposition 2.3]{W3} that
$T^{\uparrow}(n,2) \to \infty$ as $n \to \infty$. The example which shows
this can be
described more abstractly, in a way that generalizes to larger values of $k$.

\begin{prop}\label{upperlower}
Let $Q$ be an orthogonal projection in $M_n(\mathbb{C})$ of rank $n - k + 1$. Then the
operator system
$$\mathcal{V}_Q = \{A \in M_n(\mathbb{C}): QAQ\mbox{ is a scalar multiple of }Q\}$$
has no quantum $(k+1)$-cliques. Indeed, no two-dimensional extension of
$\mathcal{V}_Q$ has any quantum $(k+1)$-cliques, but every three-dimensional
extension of $\mathcal{V}_Q$ does have a quantum $(k+1)$-clique.
\end{prop}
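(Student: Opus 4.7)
The plan is to analyze the compression $P\mathcal{V}_Q P$ for each rank-$(k+1)$ orthogonal projection $P$ in terms of $m := {\rm dim}({\rm ran}(P) \cap {\rm ran}(Q))$. Write $N = n - k + 1$ for the rank of $Q$, so ${\rm rank}(I - Q) = k - 1$. Since ${\rm rank}(P) + {\rm rank}(Q) = n + 2$, we always have $m \geq 2$. Choose an orthonormal basis of ${\rm ran}(P)$ that begins with an orthonormal basis $v_1, \ldots, v_m$ of ${\rm ran}(P) \cap {\rm ran}(Q)$; a direct block computation shows that when $A \in \mathcal{V}_Q$ satisfies $QAQ = \lambda Q$, the top-left $m \times m$ block of $PAP$ is forced to be $\lambda I_m$ while the remaining entries can be realized arbitrarily. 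Hence $P\mathcal{V}_Q P$ has complex codimension $m^2 - 1 \geq 3$ in $PM_n P$. For any operator system $\mathcal{V}'$ with $\mathcal{V}_Q \subseteq \mathcal{V}'$ and ${\rm dim}(\mathcal{V}'/\mathcal{V}_Q) \leq 2$, the compression $P\mathcal{V}'P$ therefore still has positive codimension in $PM_n P$, so $P$ is not a quantum $(k+1)$-clique. This proves both that $\mathcal{V}_Q$ has no quantum $(k+1)$-cliques and that no two-dimensional extension of $\mathcal{V}_Q$ does.

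Now suppose $\mathcal{V}' = \mathcal{V}_Q + {\rm span}(Y_1, Y_2, Y_3)$ is a three-dimensional extension. Since $\mathcal{V}'$ is $*$-closed, I may choose $Y_1, Y_2, Y_3$ to be Hermitian and to form a basis of the complex three-dimensional quotient $\mathcal{V}'/\mathcal{V}_Q$. Setting $Z_i := QY_iQ$ and regarding each as a Hermitian operator on ${\rm ran}(Q) \cong \mathbb{C}^N$, the definition of $\mathcal{V}_Q$ implies that $\{I, Z_1, Z_2, Z_3\}$ is linearly independent in $M_N$. The goal is to find a two-dimensional subspace $W \subseteq {\rm ran}(Q)$ such that $\{I_W, WZ_1W, WZ_2W, WZ_3W\}$ spans $M_2(W)$; given such a $W$, the rank-$(k+1)$ projection $P$ onto $W \oplus {\rm ran}(I - Q)$ satisfies ${\rm ran}(P) \cap {\rm ran}(Q) = W$, and modulo $P\mathcal{V}_Q P$ the compressions $PY_iP$ contribute exactly the three missing dimensions of $PM_n P$ (the traceless part of the top-left $2 \times 2$ block), yielding $P\mathcal{V}'P = PM_n P$.

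The main obstacle is producing such a $W$, and I plan to handle this by a dimension count. Let $V := \{aI + b_1 Z_1 + b_2 Z_2 + b_3 Z_3 : a, b_i \in \mathbb{R}\}$, a real four-dimensional space of Hermitian matrices in $M_N$, and consider the real incidence variety
$$\mathcal{I} := \{(W, [X]) \in {\rm Gr}_\mathbb{C}(2, N) \times \mathbb{P}(V) : WXW = 0\},$$
where $\mathbb{P}(V)$ denotes the real projectivization of $V$, so ${\rm dim}_\mathbb{R}\mathbb{P}(V) = 3$. If no suitable $W$ existed then the projection $\mathcal{I} \to {\rm Gr}_\mathbb{C}(2, N)$ would be surjective. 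However, for every nonzero Hermitian $X \in M_N$ the set of $X$-isotropic complex two-planes in $\mathbb{C}^N$ has real dimension at most $4(N - 3)$ (the four real entries of $WXW$ generically impose four independent conditions on $W$, and stratifying by the rank and signature of $X$ confirms that no stratum has larger dimension). Thus ${\rm dim}_\mathbb{R}\mathcal{I} \leq 3 + 4(N - 3) = 4N - 9$, whereas ${\rm dim}_\mathbb{R}{\rm Gr}_\mathbb{C}(2, N) = 4(N - 2) = 4N - 8$, so the projection cannot be surjective, giving the desired contradiction. The edge case $N = 2$ is trivial: there $\mathcal{V}_Q$ has codimension $3$ in $M_{k+1}$, so every three-dimensional extension already equals $M_{k+1}$ and $P = I$ is a quantum $(k+1)$-clique.
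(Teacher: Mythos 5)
For the first two claims your argument is essentially the same as the paper's, just phrased differently: you compress to the $m$-dimensional intersection ${\rm ran}(P)\cap{\rm ran}(Q)$ (with $m\ge2$) and use the resulting scalar block to show $P\mathcal{V}_QP$ has codimension $\ge 3$, whereas the paper picks a single rank-$2$ subprojection $P_0\le P,Q$ and observes $P_0$ is a quantum anticlique. Both are fine.

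For the third claim you depart substantially from the paper. Your reduction to finding a $2$-plane $W\subseteq{\rm ran}(Q)$ with $\{I_W,\,P_WZ_1P_W,\,P_WZ_2P_W,\,P_WZ_3P_W\}$ spanning is exactly the same reformulation the paper uses via $Q_0\mapsto W$, but the paper then simply invokes \cite[Theorem 3.3]{W3} (``$T^{\uparrow}(N,1)=3$ for $N\ge 2$''), whereas you attempt to reprove that fact from scratch by a Grassmannian dimension count. This is where there is a genuine gap. Your key assertion is that for \emph{every} nonzero Hermitian $X\in M_N$ the locus of complex $2$-planes $W$ with $WXW=0$ has real dimension at most $4(N-3)$, ``stratifying by the rank and signature of $X$ confirms that no stratum has larger dimension.'' This is false. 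Take $X$ of rank $2$ and signature $(1,1)$, say $X=e_1e_1^*-e_2e_2^*$, and let $\pi:\mathbb{C}^N\to\mathbb{C}^2$ be projection onto the first two coordinates. One checks $WXW=0$ forces $\pi|_W$ to have a nontrivial kernel and forces $\pi(W)$ to be a null line for the form $|z_1|^2-|z_2|^2$; counting parameters (a line in $\ker X\cong\mathbb{C}^{N-2}$, plus a complementary line subject to one real condition) gives real dimension $2(N-3)+(2(N-2)-1)=4N-11$, which exceeds $4(N-3)=4N-12$. For $N=4$ this already gives a $5$-dimensional fiber, not $4$.

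The bound $\dim_{\mathbb{R}}\mathcal{I}\le 4N-9$ you need happens to survive, but only because the jump in fiber dimension is compensated by a drop in the dimension of the corresponding stratum of $\mathbb{P}(V)$: since $I\in V$ has full rank, the rank-$\le 2$ locus in $\mathbb{P}(V)\cong\mathbb{RP}^3$ is a proper real algebraic subset and hence has dimension at most $2$, so its contribution is at most $2+(4N-11)=4N-9$. Your write-up does not make this stratified count; it replaces it with the incorrect uniform fiber bound. To repair the proof you would need to carry out the stratification (over rank and signature of $X$, and over $\dim(W\cap\ker X)$), verify fiber dimensions $4N-12$, $4N-2r-7$, and $4(N-r-2)$ in the respective cases, and pair each with a bound on the dimension of the corresponding stratum in $\mathbb{P}(V)$. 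Alternatively, and much more simply, you could just cite \cite[Theorem 3.3]{W3} as the paper does once you have reduced to the assertion that the $4$-dimensional operator system $Q\mathcal{V}'Q\subseteq M_N(\mathbb{C})$ has a quantum $2$-clique.
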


\begin{proof}
Let $P$ be a rank $k+1$ orthogonal projection. Then since
${\rm rank}(P) + {\rm rank}(Q) = n + 2$ there is a rank 2 orthogonal
projection $P_0$ which lies below both $P$ and $Q$. Since $Q$ is a quantum
anticlique for $\mathcal{V}_Q$, so is $P_0$, i.e.,
${\rm dim}(P_0\mathcal{V}_QP_0) = 1$. Thus any two-dimensional extension
$\mathcal{V}_Q'$ of $\mathcal{V}_Q$ must satisfy
${\rm dim}(P_0\mathcal{V}_Q'P_0) \leq 3$, so that $P_0$ cannot be a
quantum $2$-clique for $\mathcal{V}_Q'$. This implies that $P$ cannot be
a quantum $(k+1)$-clique for $\mathcal{V}_Q'$.

Now let $\mathcal{V}_Q''$ be a three-dimensional extension of $\mathcal{V}_Q$.
Then ${\rm dim}(Q\mathcal{V}_Q''Q) = 4$. (Consider the map $F: A \mapsto QAQ$
from $M_n(\mathbb{C})$ to $QM_n(\mathbb{C})Q$. We have $\mathcal{V}_Q = {\rm ker}(F) + \mathbb{C}\cdot I_n$, so if $\mathcal{V}$ is a $d$-dimensional extension of $\mathcal{V}_Q$
then ${\rm dim}(F(\mathcal{V})) = d + 1$.) So by Theorem \ref{citedthm}
$Q\mathcal{V}_Q''Q$ has a quantum $2$-clique $Q_0$, and I claim that
the projection $P = (I-Q) + Q_0$ is then a quantum $(k+1)$-clique for
$\mathcal{V}_Q''$. To see this, let $A \in M_n(\mathbb{C})$ be any matrix
which satisfies $PAP = A$; we must show that $A \in P\mathcal{V}_Q''P$.
Since $Q_0$ is a quantum $2$-clique for $\mathcal{V}_Q''$, we can find
$B_0 \in \mathcal{V}_Q''$ such that $Q_0B_0Q_0 = Q_0AQ_0$. Let $B_1 =
QB_0Q$; then $Q(B_1 - B_0)Q = 0$ and so $B_1 - B_0 \in \mathcal{V}_Q$,
which implies that $B_1 \in \mathcal{V}_Q''$. Similarly,
$Q(A - Q_0AQ_0)Q = Q(PAP - Q_0AQ_0)Q = 0$ so $A - Q_0AQ_0 \in \mathcal{V}_Q$,
and finally
$B = A - Q_0AQ_0 + B_1$ belongs to $\mathcal{V}_Q''$ and satisfies $PBP = A$.
Thus we have shown that $P\mathcal{V}_Q''P$ contains $A$, as desired.
\end{proof}

The last part of Proposition \ref{upperlower} shows that the operator
systems $\mathcal{V}_Q'$ are maximal for not having any quantum
$(k+1)$-cliques. Of course, this does not rule out the possibility that
other operator systems whose dimensions are larger could lack quantum
$(k+1)$-cliques.

If $Q$ is diagonalized as $Q = {\rm diag}(0, \ldots, 0, 1, \ldots, 1)$
(with $k-1$ zeros and $n-k+1$ ones) then $\mathcal{V}_Q$ appears as the
set of matrices whose restriction to the bottom right $(n-k+1)\times(n-k+1)$
corner is a scalar multiple of the $(n-k+1)\times(n-k+1)$ identity matrix, and
which can be anything on the top and left $(k-1)\times n$ and $n\times (k-1)$
strips. Thus ${\rm dim}(\mathcal{V}_Q) = 2(k-1)n - (k-1)^2 + 1$ and we infer
the following corollary.

\begin{coro}\label{thirdbound}
$T^{\uparrow}(n,k) \geq 2(k-1)n - (k-1)^2 + 3$.
\end{coro}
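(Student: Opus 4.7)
The plan is to invoke Proposition \ref{upperlower} directly: that result produces, for any rank $n-k+1$ projection $Q$, an operator system $\mathcal{V}_Q$ with no quantum $(k+1)$-cliques such that \emph{no two-dimensional extension} of $\mathcal{V}_Q$ has quantum $(k+1)$-cliques either. Thus to get the lower bound on $T^{\uparrow}(n,k)$ it suffices to compute ${\rm dim}(\mathcal{V}_Q)$ and add $2$.

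I would begin by diagonalizing $Q$ as $Q = {\rm diag}(0,\ldots,0,1,\ldots,1)$ with $k-1$ zeros followed by $n-k+1$ ones, and writing a generic $A \in M_n(\mathbb{C})$ in the corresponding $2 \times 2$ block form with blocks $A_{11}$ of size $(k-1)\times(k-1)$, $A_{22}$ of size $(n-k+1)\times(n-k+1)$, and off-diagonal blocks $A_{12}, A_{21}$. The condition $QAQ = \lambda Q$ becomes simply $A_{22} = \lambda I_{n-k+1}$, with no constraint on the other three blocks. So $\mathcal{V}_Q$ is described exactly as the paragraph preceding the corollary indicates.

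Counting complex dimensions block by block gives $(k-1)^2$ from $A_{11}$, $2(k-1)(n-k+1)$ from the two off-diagonal strips, and $1$ from the scalar choice in $A_{22}$. Expanding $2(k-1)(n-k+1) = 2(k-1)n - 2(k-1)^2$ and adding yields
$$ {\rm dim}(\mathcal{V}_Q) = 2(k-1)n - (k-1)^2 + 1.$$
By Proposition \ref{upperlower} any two-dimensional extension of $\mathcal{V}_Q$ still has no quantum $(k+1)$-cliques, so there is an operator system in $M_n(\mathbb{C})$ of dimension $2(k-1)n - (k-1)^2 + 3$ with no such cliques, establishing the claimed inequality. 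There is no real obstacle here; the proof is purely a dimension count applied to the ``maximal'' operator system already constructed in Proposition \ref{upperlower}.
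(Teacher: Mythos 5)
Your proof is correct and is essentially the same as the paper's: both diagonalize $Q$, describe $\mathcal{V}_Q$ as matrices that are arbitrary on the $(k-1)$-wide top/left strips and scalar on the bottom-right $(n-k+1)\times(n-k+1)$ corner, count $\dim(\mathcal{V}_Q) = 2(k-1)n - (k-1)^2 + 1$, and then add $2$ using the ``no two-dimensional extension has a quantum $(k+1)$-clique'' clause of Proposition \ref{upperlower}. The only cosmetic difference is how the strip dimension is tallied (your block-by-block count versus the paper's overlapping-strips count), and these agree.
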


The classical analog of the operator system $\mathcal{V}_Q$ is the graph
on $n$ vertices which is the edge complement of a single $(n-k+1)$-clique.
In other words, the only missing edges are those both of whose endpoints
lie within a fixed set of $n-k+1$ vertices. Such a graph contains no
$(k+1)$-cliques, but the number of edges is has is linear in $n$,
whereas the classical Tur\'{a}n numbers go like $n^2$.

We could try to get a better lower bound by considering the matrix analog
of a $(k+1)$-cliqueless graph with the maximal number of edges.
This graph is the edge complement of a disjoint union of $k$ many cliques of
equal or nearly equal size. The matrix analog would be the operator system
$\{A \in M_n(\mathbb{C}): P_iAP_i$ is a scalar multiple of $P_i$ for $1 \leq i \leq k\}$
where $P_1, \ldots, P_k$ are orthogonal projections of equal or nearly equal rank
which sum to $I_n$. But this idea does not work because this operator
system typically does have quantum $(k+1)$-cliques. This is most simply illustrated
in the case $k=2$ when we are dealing with a ``complete bipartite'' operator
system which might be expected to have no quantum $3$-cliques. This expectation
fails badly, however:

\begin{prop}
Let $\mathcal{V}_0 \subset M_{2k}(\mathbb{C})$ be the set of matrices of the form
$\left[\begin{matrix}0&A\cr B&0\end{matrix}\right]$ with $A,B \in M_k(\mathbb{C})$,
and let $\mathcal{V}$ be the operator system $\mathcal{V} = \mathcal{V}_0
+ \mathbb{C}\cdot I_{2k}$. Then $\mathcal{V}$ has a quantum $k$-clique.
\end{prop}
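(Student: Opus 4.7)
My plan is to exhibit an explicit rank $k$ projection by taking $P$ to be the projection onto the ``diagonal'' subspace
$$W = \left\{\frac{1}{\sqrt{2}}(v, v) : v \in \mathbb{C}^k\right\} \subseteq \mathbb{C}^k \oplus \mathbb{C}^k = \mathbb{C}^{2k},$$
and then show $P\mathcal{V}P$ is all of $PM_{2k}(\mathbb{C})P$.

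To verify the rank, set $v_i = \frac{1}{\sqrt{2}}(e_i, e_i)$ for $1 \leq i \leq k$; a one-line check gives $\langle v_i, v_j\rangle = \delta_{ij}$, so $\{v_1, \ldots, v_k\}$ is orthonormal and $P = \sum_i v_iv_i^*$ has rank $k$.

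Next I would compute the compression. For a general element $T = \lambda I_{2k} + \left[\begin{matrix}0&A\cr B&0\end{matrix}\right] \in \mathcal{V}$, a direct computation gives
$$\langle T v_i, v_j\rangle = \lambda\delta_{ij} + \tfrac{1}{2}(A_{ji} + B_{ji}).$$
Under the identification of $PM_{2k}(\mathbb{C})P$ with $M_k(\mathbb{C})$ via the basis $\{v_1, \ldots, v_k\}$, the compression $T \mapsto PTP$ therefore corresponds to the map $(\lambda, A, B) \mapsto \lambda I_k + \tfrac{1}{2}(A + B)^T$ into $M_k(\mathbb{C})$.

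Since $A$ ranges freely over $M_k(\mathbb{C})$ (even with $\lambda = 0$ and $B = 0$), this linear map is surjective onto $M_k(\mathbb{C})$, so $\mathrm{dim}(P\mathcal{V}P) = k^2$. There is no real obstacle here beyond choosing the right subspace: the diagonal $W$ is designed so that the two off-diagonal blocks $A$ and $B$ both map onto the same full matrix algebra under compression, which makes $P$ automatically a quantum $k$-clique regardless of whether one is willing to use the constant $\lambda I$ direction.
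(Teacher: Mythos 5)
Your proposal is correct and uses essentially the same approach as the paper: compressing to the projection onto the diagonal subspace $\{\tfrac{1}{\sqrt 2}(v\oplus v): v\in\mathbb{C}^k\}$ and observing that the off-diagonal block $A$ already surjects onto $M_k(\mathbb{C})$ under the compression. The paper phrases this slightly more abstractly (exhibiting a witness $\left[\begin{smallmatrix}0&A\cr A&0\end{smallmatrix}\right]$ for each target operator on $E$), while you carry out the coordinate computation explicitly; the minor indexing discrepancy in whether $\tfrac12(A+B)$ or its transpose appears is harmless since transposition is a linear bijection of $M_k(\mathbb{C})$.
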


\begin{proof}
Let $E = \{v \oplus v: v \in \mathbb{C}^k\} \subset \mathbb{C}^{2k}$ and let
$P$ be the orthogonal projection onto $E$. Any linear operator from $E$ to
itself has the form $(v\oplus v) \mapsto (Av\oplus Av)$ for some $A \in M_k(\mathbb{C})$.
But for any $A \in M_k(\mathbb{C})$ the
matrix $A' = \left[\begin{matrix}0&A\cr A&0\end{matrix}\right]$ satisfies
$$(PA'P)(v \oplus v) = A'(v\oplus v) = Av \oplus Av,$$
so that $P\mathcal{V}P$ contains every linear operator from $E$ to itself.
That is, $P$ is a quantum $k$-clique.
\end{proof}

In fact, linearity in $n$ is the most we can ask for in a lower bound
on $T^{\uparrow}(n,k)$, because --- incredibly --- we can give an upper
bound on $T^{\uparrow}(n,k)$ which is also linear in $n$. The argument
uses the following result from \cite{W3}. Let $(e_i)$ be the standard
basis of $\mathbb{C}^n$.

\begin{lemma}\label{findclique}
(\cite[Lemma 4.4]{W3})
Let $n = k^4 + k^3 + k - 1$ and let $\mathcal{V}$ be an operator system
contained in $M_n(\mathbb{C})$. Suppose $\mathcal{V}$ contains matrices $A_1, \ldots,
A_{k^4 + k^3}$ such that for each $i$ we have $\langle A_ie_i, e_{i+1}\rangle
\neq 0$, and also $\langle A_i e_r,e_s\rangle = 0$ whenever ${\rm max}(r,s)
> i + 1$ and $r \neq s$. Then $\mathcal{V}$ has a quantum $k$-clique.
\end{lemma}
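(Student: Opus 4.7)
\medskip

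\noindent\textbf{Plan.} The aim is to exhibit a rank-$k$ orthogonal projection $P$ with $\dim(P\mathcal{V}P)=k^2$. I plan to take $P=\sum_{l=1}^k v_lv_l^*$ for an orthonormal family $\{v_1,\ldots,v_k\}\subset\mathbb{C}^n$ built iteratively, so that the compression map $A\mapsto PAP$ sends $\mathcal{V}$ onto all of $M_k(\mathbb{C})$. Since $I_n$ already contributes $I_k$ to the image, the task is to exhibit, for each matrix unit $E_{a,b}$ in the basis $\{v_l\}$, an element of $\mathcal{V}$ whose compression is a scalar multiple of $E_{a,b}$.

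The factorization $k^4+k^3=k(k^3+k^2)$ suggests partitioning the index range $\{1,\ldots,k^4+k^3\}$ into $k$ consecutive \emph{phases} $\Phi_1<\cdots<\Phi_k$ of $k^3+k^2$ matrices apiece. Phase $\Phi_l$ will be responsible both for choosing $v_l$ and for producing the $2l-1$ matrix units $E_{l,1},\ldots,E_{l,l}$ (and, by Hermitian conjugation, $E_{1,l},\ldots,E_{l-1,l}$) in the $l$-th row and column of $M_k(\mathbb{C})$. The $k-1$ basis indices above $k^4+k^3+1$ provide the slack needed to complete the support of $v_k$.

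The construction proceeds inductively. Having fixed $v_1,\ldots,v_{l-1}$, I would choose $v_l$ as a unit vector supported on a short segment of basis vectors near the top of $\Phi_l$, orthogonal to the preceding vectors; its coefficients are chosen adaptively so that suitable linear combinations of $\{A_i:i\in\Phi_l\}\cup\{I_n\}$ compress to the $2l-1$ required matrix units. A crucial consistency property is that if $i<\min\Phi_l$, then $A_i$'s off-diagonal entries are confined to a block lying strictly below $v_l$'s support, so the off-diagonal part of $PA_iP$ vanishes in the $l$-th row and column; consequently, matrix units produced during earlier phases are not disturbed at later stages, and we can focus on one phase at a time.

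The main obstacle is the third step: isolating matrix units within a single phase against the adversarial choice of the ``free'' entries of $A_i$. The hypothesis guarantees only one non-zero entry of each $A_i$, namely $(A_i)_{i+1,i}$, while the rest of its $(i+1)\times(i+1)$ off-diagonal block and its entire diagonal may be anything. These unconstrained entries inject noise into the compression that must be cancelled by Gaussian elimination across the $k^3+k^2$ matrices of the phase. The phase size is calibrated precisely for this: with roughly $k^2$ noise directions per compressed matrix and $2l-1\leq 2k-1$ target matrix units, the number of matrices per phase provides just enough degrees of freedom to cancel the noise while preserving a non-vanishing coefficient on each target entry. Showing that the adversarial choices cannot conspire to produce a degeneracy that kills a target entry --- so that the resulting linear system is genuinely solvable --- is the technical heart of the argument.
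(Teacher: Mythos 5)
The paper does not prove Lemma~\ref{findclique}; it is quoted verbatim from \cite[Lemma 4.4]{W3}, so there is no in-text argument to compare against. Taking your proposal on its own terms, it is a strategy sketch rather than a proof, and you say so yourself: the ``technical heart of the argument'' --- showing that the linear system one must solve within a phase is in fact solvable --- is named at the end but never carried out, and the dimension count (``roughly $k^2$ noise directions per compressed matrix,'' ``just enough degrees of freedom'') is left as a heuristic that is not obviously consistent. So as written nothing is established.

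There is also a concrete flaw in the consistency property on which the phase-by-phase plan rests. You observe, correctly, that if $i<\min\Phi_l$ then the compression $PA_iP$ has vanishing \emph{off-diagonal} entries in row and column $l$, because the hypothesis constrains $\langle A_ie_r,e_s\rangle$ only when $r\neq s$. But the diagonal entries $\langle A_ie_r,e_r\rangle$ are completely unconstrained for every $r$, so $\langle A_iv_m,v_m\rangle$ is an arbitrary number for every $m$ --- including $m>l$. Thus the combination of $\{A_i:i\in\Phi_l\}$ that is supposed to compress to $E_{l,l}$ is contaminated on the diagonal positions $(m,m)$ with $m>l$, and those contaminations depend on the vectors $v_{l+1},\ldots,v_k$ that have not yet been chosen; one cannot proceed ``one phase at a time'' in the way you describe. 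Moreover, even if the plan delivered every off-diagonal matrix unit, one would only have shown $\dim(P\mathcal{V}P)\geq k^2-k+1$: since $P\mathcal{V}P$ is merely an operator system (not a subalgebra), having all off-diagonal matrix units together with $I_k$ does not force the diagonal to be full, and the hypothesis gives no handle at all on the diagonals. Any complete argument has to confront the diagonal directions separately; your outline does not acknowledge the issue.
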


We need this lemma to prove the next result, which is extracted from the
proof of \cite[Theorem 4.5]{W3}. For the reader's convenience I include
the proof here.

\begin{lemma}\label{rangelemma}
Let $\mathcal{V}$ be an operator system in $M_n(\mathbb{C})$ and suppose that for
each nonzero $v \in \mathbb{C}^n$ we have ${\rm dim}(\mathcal{V}v) \geq
8k^8$. Then $\mathcal{V}$ has a quantum $k$-clique.
\end{lemma}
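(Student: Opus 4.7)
The plan is to apply Lemma \ref{findclique} after compressing to a suitable finite-dimensional subspace. Specifically, I would construct an orthonormal sequence $v_1, \ldots, v_N \in \mathbb{C}^n$, with $N = k^4 + k^3 + k - 1$, and Hermitian matrices $A_1, \ldots, A_{N-k+1}$ in $\mathcal{V}$ (using that $\mathcal{V}$ is $*$-closed) so that the inner products $\langle A_i v_r, v_s\rangle$ satisfy the hypotheses of Lemma \ref{findclique}. Setting $W = \mathrm{span}(v_1, \ldots, v_N)$ and applying Lemma \ref{findclique} to the induced operator system $P_W \mathcal{V} P_W$ (with $v_1, \ldots, v_N$ in the role of the standard basis) then yields a quantum $k$-clique of $P_W \mathcal{V} P_W$, which is automatically a quantum $k$-clique of $\mathcal{V}$.

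The construction is by induction on a stage $i$, starting from $X_0 = \mathbb{C}^n$. After stage $i$, I would have orthonormal $v_1, \ldots, v_i$, Hermitian $A_1, \ldots, A_{i-1}$ in $\mathcal{V}$, and a ``reserve'' subspace $X_i \subseteq \mathbb{C}^n$ of complex dimension $N - i$ orthogonal to $v_1, \ldots, v_i$, satisfying the invariants $P_{X_i} A_j P_{X_i} = \lambda_j P_{X_i}$ (scalar compression on the tail) and $A_j v_r \perp X_i$ for each $j < i$ and $r \leq i$. These invariants guarantee that the conditions of Lemma \ref{findclique} for the matrices already constructed hold automatically, regardless of how the remaining $v_{i+1}, \ldots, v_N$ are chosen inside $X_i$.

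The key inductive step from stage $i$ to stage $i+1$ picks $v_{i+1} \in X_i$ (unit), $X_{i+1} = X_i \ominus \mathrm{span}(v_{i+1})$, and $A_i \in \mathcal{V}^h$ satisfying (a) $A_i v_r \perp X_{i+1}$ for all $r \leq i+1$; (b) $P_{X_{i+1}} A_i P_{X_{i+1}}$ is a scalar multiple of $P_{X_{i+1}}$; and (c) $\langle A_i v_i, v_{i+1}\rangle \neq 0$. Conditions (a) and (b) are linear constraints on $A_i$, and a direct count shows they cut $\mathcal{V}$ down by complex codimension at most $(N-i-1)N - 1 \leq N^2 - N - 1$. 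Using that $N^2 \leq 8k^8$ in the relevant range of $k$, together with the hypothesis $\dim_{\mathbb{C}}(\mathcal{V} v_i) \geq 8 k^8$ and rank-nullity applied to the evaluation $A \mapsto A v_i$, the image $\mathcal{V}' v_i$ of the constrained subspace $\mathcal{V}'$ retains complex dimension at least $8 k^8 - (N^2 - N - 1) \geq N + 1$. A unit vector $v_{i+1}$ in $X_i$ not orthogonal to $\mathcal{V}' v_i$ then exists, and I would take the corresponding $A_i \in \mathcal{V}'$.

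The hard part will be reconciling the coupled dependencies between the three unknowns $(A_i, v_{i+1}, X_{i+1})$: conditions (a) and (b) depend on $X_{i+1}$, which is itself determined by $v_{i+1}$, while (c) couples $A_i$ to $v_{i+1}$. Breaking this circularity requires either a simultaneous treatment, or a strengthening of the inductive hypothesis to ensure that the reserve subspace $X_i$ stays \emph{aligned} with $\mathcal{V} v_i$ so that $\mathcal{V}' v_i$ has non-trivial projection onto $X_i$ (otherwise no suitable $v_{i+1}$ inside $X_i$ exists). The value $8k^8$ in the hypothesis is roughly $N^2$, calibrated precisely to absorb the $\sim N^2$ linear constraints per inductive step while leaving the slack needed to realize the nonzero condition (c).
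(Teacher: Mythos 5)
Your proposal correctly identifies Lemma \ref{findclique} as the target and correctly sees that one must build the $v_i$ iteratively by exploiting $\dim(\mathcal{V}v) \geq 8k^8$. But as you flag at the end, the step from stage $i$ to stage $i+1$ does not close: conditions (a) and (b) are posed on $X_{i+1}$, which is determined by $v_{i+1}$, while (c) couples $A_i$ back to $v_{i+1}$, so the three unknowns $(A_i, v_{i+1}, X_{i+1})$ cannot be resolved by the sequential scheme you describe. This is a genuine gap, not a bookkeeping issue: once $v_{i+1}$ is fixed, condition (a) with $r=i$ already forces $A_i v_i$ into $X_{i+1}^\perp$, and nothing in your dimension count prevents the remaining freedom for $A_i v_i$ from landing entirely outside $\mathbb{C}v_{i+1}$, killing (c). Moreover conditions (a)--(b) are substantially stronger than what Lemma \ref{findclique} asks for: you demand that $P_{X_{i+1}} A_j P_{X_{i+1}}$ be a scalar multiple of $P_{X_{i+1}}$, whereas the lemma only needs the off-diagonal inner products to vanish; those extra $\sim (N-i-1)^2$ constraints are what make your count so tight.

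The paper breaks the circularity by a different device. Rather than choosing $v_{i+1}$ and $A_i$ independently and then coupling them, it \emph{defines} $v_{r+1} := A_r v_r$. The nonvanishing condition $\langle A_r v_r, v_{r+1}\rangle = \|v_{r+1}\|^2 \neq 0$ is then automatic, and all one must arrange is that $v_{r+1}$ be nonzero and orthogonal to a small blacklist: the span of $v_1$ together with $A_j v_l$ and $A_j^* v_l$ for $j < r$, $l \leq r$, of dimension at most $2r^2 - 2r + 1$. Since $\dim(\mathcal{V} v_r) \geq 8k^8 > 2r^2 - 2r + 1$ for $r \leq 2k^4$, a suitable $A_r$ always exists by a codimension count. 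One checks directly that orthogonality to this blacklist gives $\langle A_j v_r, v_s\rangle = 0$ whenever $\max(r,s) > j+1$ and $r \neq s$, which is exactly the hypothesis of Lemma \ref{findclique}; no reserve subspace and no scalar-compression requirement are needed. If you want to salvage your outline, the key change is to let $A_i$ \emph{produce} $v_{i+1}$ rather than trying to match an $A_i$ to a separately chosen $v_{i+1}$ on a separately chosen tail space.
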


\begin{proof}
Let $v_1$ be any nonzero vector in $\mathbb{C}^n$ and find $A_1 \in
\mathcal{V}$ such that $v_2 = A_1v_1$ is nonzero and orthogonal to $v_1$.
Then find $A_2 \in \mathcal{V}$ such that $v_3 = A_2v_2$ is nonzero and
orthogonal to each of $v_1$, $A_1v_1$, $A_1^*v_1$, $A_1v_2$, and $A_1^*v_2$.
Continue in this way, at the $r$th step finding $A_r \in \mathcal{V}$ such
that $v_{r+1} = A_rv_r$ is nonzero and orthogonal to the span of the vectors
$v_1$ and $A_iv_j$ and $A_i^*v_j$ for $i < r$ and $j \leq r$. The dimension
of this span is at most $2r^2 - 2r + 1$, so as long as $r \leq 2k^4$ its
dimension is less than $8k^8$ and a suitable matrix $A_r$ can be found.
Compressing to the span of the $v_i$ for $1 \leq i \leq k^4 + k^3 + k -1$
then puts us in the situation of Lemma \ref{findclique}, so there is a
quantum $k$-clique by that result.
\end{proof}

\begin{theo}
Let $\mathcal{V}$ be an operator system in $M_n(\mathbb{C})$ of dimension
at least $16k^8n$. Then $\mathcal{V}$ has a quantum $k$-clique.
\end{theo}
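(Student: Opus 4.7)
The plan is to prove the contrapositive: assume $\mathcal{V}$ has no quantum $k$-clique and derive $\dim(\mathcal{V}) < 16k^8n$. The idea is to bootstrap Lemma \ref{rangelemma} along a flag of compressions of $\mathcal{V}$ to produce a good orthonormal basis, then bound $\dim(\mathcal{V})$ column-by-column by exploiting Hermiticity.

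Concretely, I would build an orthonormal basis $v_1, \ldots, v_n$ of $\mathbb{C}^n$ inductively. Having chosen $v_1, \ldots, v_{i-1}$, let $P_i$ be the orthogonal projection onto $\mathrm{span}(v_1, \ldots, v_{i-1})^\perp$; the compression $\mathcal{V}_i = P_i \mathcal{V} P_i$ is an operator system on $\mathrm{ran}(P_i)$. Because any rank-$k$ orthogonal projection $Q \leq P_i$ satisfies $Q\mathcal{V}Q = Q\mathcal{V}_iQ$, a quantum $k$-clique for $\mathcal{V}_i$ would also be one for $\mathcal{V}$, so $\mathcal{V}_i$ has none. By the contrapositive of Lemma \ref{rangelemma} (or trivially when $\mathrm{rank}(P_i) < 8k^8$) we may then choose a unit vector $v_i \in \mathrm{ran}(P_i)$ with $\dim_\mathbb{C}(\mathcal{V}_i v_i) < 8k^8$, and proceed to step $i+1$.

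For the dimension count, observe that in the basis $(v_1, \ldots, v_n)$ the vector $P_i A v_i$ records precisely the entries $\langle Av_i, v_j\rangle$ for $j \geq i$, i.e., the lower triangular part of the $i$th column of $A$ (including the diagonal). Define the $\mathbb{R}$-linear map $\Phi \colon \mathcal{V}^h \to \bigoplus_{i=1}^n P_i\mathbb{C}^n$ by $\Phi(A) = (P_i A v_i)_{i=1}^n$. This $\Phi$ is injective, because a Hermitian matrix whose entries on and below the diagonal all vanish must itself be zero. Hence
\[\dim_\mathbb{C}(\mathcal{V}) = \dim_\mathbb{R}(\mathcal{V}^h) \leq \sum_{i=1}^n \dim_\mathbb{R}(\mathcal{V}_i v_i) = \sum_{i=1}^n 2\dim_\mathbb{C}(\mathcal{V}_i v_i) < 16k^8 n,\]
contradicting the hypothesis.

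The only subtle point is the real-versus-complex bookkeeping: one must restrict to the real subspace $\mathcal{V}^h$ in order to use Hermiticity in the injectivity argument, and this is exactly what produces the factor of $2$ that turns the threshold $8k^8$ of Lemma \ref{rangelemma} into the $16k^8$ of the theorem. Beyond this, the argument is a mechanical combination of Lemma \ref{rangelemma} with the elementary fact that a Hermitian matrix is determined by its lower triangular part, and I do not foresee a real technical obstacle.
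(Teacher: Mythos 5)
Your proof is correct. It is the same underlying idea as the paper's --- extracting vectors $v_i$ one at a time for which the compression's range space $\mathcal{V}_i v_i$ has dimension $< 8k^8$, using the contrapositive of Lemma \ref{rangelemma} at each step --- but you organize the bookkeeping differently. The paper runs a downward induction on $n$: compress to codimension one against a ``bad'' vector $v$, losing at most $16k^8$ dimensions (the factor of $2$ coming from tracking both $Av$ and $A^*v$, since $\mathcal{V}$ is $*$-closed), with base case $n = 16k^8$ forcing $\mathcal{V} = M_n(\mathbb{C})$. You instead build the full flag $v_1,\ldots,v_n$ at once and package the dimension count as injectivity of the $\mathbb{R}$-linear map $\Phi\colon \mathcal{V}^h \to \bigoplus_i \mathcal{V}_i v_i$, handling the $*$-symmetry by restricting to the Hermitian part rather than by doubling via $A \mapsto (Av, A^*v)$; both devices produce the same factor of $2$ and hence the same $16k^8n$. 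Your version has the minor aesthetic advantage of avoiding an explicit base case and making the provenance of the constant transparent; the paper's inductive formulation is perhaps shorter to write. One small thing worth stating explicitly in a write-up: the image of $\Phi$ lands in $\bigoplus_i \mathcal{V}_i v_i$ (not merely $\bigoplus_i P_i\mathbb{C}^n$) precisely because $v_i \in \mathrm{ran}(P_i)$ forces $P_iAv_i = (P_iAP_i)v_i \in \mathcal{V}_i v_i$; this is what gives the bound $\dim_\mathbb{R}(\mathrm{image}) \le 2\sum_i \dim_\mathbb{C}(\mathcal{V}_i v_i)$.
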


\begin{proof}
Fix $k$; the proof goes by induction on $n$. The smallest sensible value of $n$
is $n = 16k^8$, as for smaller values of $n$ the dimension of $\mathcal{V}$ is
at most $n^2 < 16k^8n$. When $n$ exactly equals $16k^8$, the only way to have
${\rm dim}(\mathcal{V}) \geq 16k^8n$ is if $\mathcal{V} = M_n(\mathbb{C})$,
so it certainly has a quantum $k$-clique. In the induction step, first
suppose that there exists a nonzero vector $v \in \mathbb{C}^n$ such
that ${\rm dim}(\mathcal{V}v) < 8k^8$. Let $P$ be the rank $n-1$ orthogonal
projection onto the orthocomplement of $\mathbb{C}\cdot v$ in $\mathbb{C}^n$.
If $A \in \mathcal{V}$ satisfies $PAP = 0$ then, with respect to an orthonormal
basis of which $v$ is the first element, $A$ is the sum of a matrix which is
zero except on the leftmost column and a matrix which is zero except on the
topmost row. Since ${\rm dim}(\mathcal{V}v) < 8k^8$, it follows that
the set $\{A \in \mathcal{V}: PAP = 0\}$ has dimension at most
$16k^8$. Thus
$${\rm dim}(P\mathcal{V}P) \geq {\rm dim}(\mathcal{V}) - 16k^8
\geq 16k^8(n-1),$$
and the induction hypothesis tells us that $P\mathcal{V}P$ has a quantum
$k$-clique, so $\mathcal{V}$ does as well.

Otherwise, for every nonzero vector $v \in \mathbb{C}^n$ we have
${\rm dim}(\mathcal{V}v) \geq 8k^8$, and then $\mathcal{V}$ has a quantum
$k$-clique by Lemma \ref{rangelemma}.
\end{proof}

Putting this together with Corollary \ref{thirdbound} yields the promised
bounds on $T^{\uparrow}(n,k)$.

\begin{coro}
$2(k-1)n - (k-1)^2 + 3 \leq T^{\uparrow}(n,k) < 16(k+1)^8n$.
\end{coro}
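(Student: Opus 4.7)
The plan is to obtain the corollary by simply concatenating the two bounds that have just been established in this section. Neither direction requires any new idea; the entire content is in the results already proved.

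First I would handle the lower bound. This is nothing more than Corollary \ref{thirdbound}, which was obtained by computing $\dim(\mathcal{V}_Q) = 2(k-1)n - (k-1)^2 + 1$ for the explicit operator system $\mathcal{V}_Q$ attached to a rank $(n-k+1)$ projection $Q$, and then observing via Proposition \ref{upperlower} that no two-dimensional extension of $\mathcal{V}_Q$ has a quantum $(k+1)$-clique. Thus the desired dimension $2(k-1)n - (k-1)^2 + 3$ is realized by an operator system with no quantum $(k+1)$-clique, which forces $T^{\uparrow}(n,k) \geq 2(k-1)n - (k-1)^2 + 3$.

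For the upper bound I would invoke the preceding theorem with $k+1$ substituted in place of $k$: every operator system $\mathcal{V} \subseteq M_n(\Cb)$ of dimension at least $16(k+1)^8 n$ contains a quantum $(k+1)$-clique. By the very definition of $T^{\uparrow}(n,k)$ as the largest dimension of a quantum $(k+1)$-cliqueless operator system in $M_n(\Cb)$, this immediately gives $T^{\uparrow}(n,k) < 16(k+1)^8 n$.

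Since both inequalities have been fully established in the results quoted, there is no obstacle here; the only thing to be careful about is the index shift (the theorem guarantees $k$-cliques under a $16k^8 n$-dimension hypothesis, so it must be applied with parameter $k+1$ to rule out $(k+1)$-cliques in the Tur\'{a}n setting). Combining the two bounds yields the stated double inequality.
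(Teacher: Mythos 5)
Your proposal is correct and is exactly the paper's argument: the lower bound is Corollary \ref{thirdbound} verbatim, and the upper bound follows by applying the preceding theorem with $k$ replaced by $k+1$, which forces every operator system of dimension at least $16(k+1)^8 n$ to have a quantum $(k+1)$-clique. You also correctly flag the only point requiring care, namely the index shift when invoking the theorem.
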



\end{document}